 \definecolor{spoc}{RGB}{180, 90, 20}
\definecolor{kc}{RGB}{27, 121, 0}
\newcommand{\setword}[2]{%
	\phantomsection
	#1\def\@currentlabel{\unexpanded{#1}}\label{#2}%
}
\newtheorem{thm}{Theorem}[section]
\newtheorem{lem}[thm]{Lemma}
\newtheorem{cor}[thm]{Corollary}
\newtheorem{rem}[thm]{Remark}
\newtheorem{rems}[thm]{Remarks}
\newtheorem{prop}[thm]{Proposition}
\newtheorem{df}[thm]{Definition}
\newtheorem{dfs}[thm]{Definitions}
\newtheorem{ex}[thm]{Example}
\newcommand{\vS}{\varSigma}
\newcommand{\vO}{\varOmega}
\newcommand{\vT}{\varTheta}
\newcommand{\uph}{\upharpoonright}
\def\N{{\mathbb N}}
\def\R{{\mathbb R}}
\def\B{{\mathfrak B}}
\newcommand{\E}{\mathbb{E}}
\newcommand{\al}{\alpha}
\newcommand{\be}{\beta}
\newcommand{\vY}{\varUpsilon}
\newcommand{\1}{\mathbbm{1}}
\author{Spyridon M. Tzaninis}
\address{Department of Statistics and Insurance Science\\ University of Piraeus\\ 80 Karaoli and Dimitriou str.\\ 185 34 Piraeus\\ Greece}
\email{stzaninis@unipi.gr; bozikas@unipi.gr}
\thanks{}
\author{Apostolos Bozikas}
\thanks{}
\date{\today}
\title[Extensions of Panjer's recursion for mixed compound distributions]{Extensions of Panjer's recursion for mixed compound distributions}
\subjclass[2020]{Primary  91G05; Secondary  60E05, 28A50.} 
\keywords{Mixed compound distributions; Panjer recursion; regular conditional probabilities; exchangeability}
\begin{document}

	\begin{abstract}
In actuarial practice, the usual independence assumptions for the collective risk model are often violated, implying a growing need for considering more general models that incorporate dependence. To this purpose, the present paper studies the mixed counterpart of the classical Panjer family of claim number distributions and  their compound version, by allowing the parameters of the distributions to be viewed as random variables. Under the assumptions that the claim size process is conditionally i.i.d. and (conditionally) mutually independent of the claim counts, we provide a recursive algorithm for the computation of the probability mass function of the aggregate claim sizes. The case of a compound Panjer distribution with exchangeable claim sizes is also studied. For the sake of completeness, our results are illustrated by various numerical examples.
	\end{abstract}
	
	\maketitle

\section{Introduction}
Compound distributions are widely used in actuarial science for modelling the aggregate claims amount (denoted as $S$)  paid by an insurance company over a fixed period of time.  The computation of the probability distribution $P_S$ of $S$ is one of the main topics in the field. More formally, the random variable $S$ is defined as the random sum $S:=\sum_{n=1}^N X_n$, where $X:=\{X_n\}_{n\in\N}$ represents the individual claim sizes and $N$ is the random number of claims. Usually, the sequence $X$ is assumed to be independent and identically distributed and independent of $N$.  Even under these independence assumptions, the determination of $P_S$   can be a difficult task, since one usually has to compute higher order convolutions. Consequently, recursive algorithms are often used for the computation of $P_S$. \smallskip

One of the most prominent algorithms in actuarial literature is Panjer's recursion (see \cite{pa}, Theorem, p. 24), which in the case of discrete claims size distribution simplifies the computation of the probability mass function (abbreviated as pmf) of $S$. Recall that the Panjer class of order $k\in\N_0$, denoted by ${\bf Panjer}(a,b;k) $, is a collection of claim number distributions $P_N=\{p_n\}_{n\in\N_0}$, satisfying $p_n=0$ for any $n\leq k-1$  and 
\begin{equation}
	p_n=\left(a+\frac{b}{n}\right)\cdot p_{n-1}\quad\text{ for all } n\geq k+1
	\label{orpa}
\end{equation}
(see \cite{hls}, p. 283).	The members of the aforementioned class have been characterized by  \cite{sj} for $k=0$,  \cite{wil88} for $ k=1 $ and \cite{hls} for a general $k \geq 2$. Recently, \cite{fac} proposed a reparametrization  for the members of the ${\bf Panjer}(a,b;k)$ class for $k\geq 0$ resulting in a more practical and unified representation of the corresponding probability distributions. More general recursive expressions than \eqref{orpa} have also been studied by  \cite{hessa}, \cite{sch}, \cite{su} and \cite{ws}. For an extensive review on recursive formulas for actuarial applications, we refer to \cite{sv} and the references therein.\smallskip

In the case of inhomogeneous insurance portfolios, mixtures of claim number distributions are considered for modelling the claim counts. \cite{gy}, \cite{gdsco}, \cite{matal}, \cite{tzougas} and \cite{zwang} proposed some (compound) mixed negative binomial distributions with different parametrizations using various mixing distributions. On the other hand, the literature for mixed Poisson and compound mixed Poisson distributions is much more enriched, encompassing,  among others, the works of \cite{anch}, \cite{kaxe}, \cite{nakp1, nakp2}, \cite{sv2004}, \cite{tzougas2} and \cite{wil1986, wil1987}. In particular, recursive formulas have been obtained by \cite{wil} under the additional assumption that the logarithm of the induced (by the mixing distribution) probability density function can be written as the ratio of two polynomials. Moreover, by adopting the usual independence assumptions, \cite{hessb} obtained a recursive formula for the computation of $P_S$. 
\smallskip 

In actuarial practice, the assumptions of independence among the random variables of $X$ and the mutual independence between $X$ and $N$ are often violated (see among others \cite{albo}, \cite{alte} and \cite{kopa}). In particular, the mutual independence assumption seems to be unrealistic, especially when considering inhomogeneous portfolios. This motivates us to consider a dependence structure among the random variables of $ X $ and between $ N $ and $ X $. In contrast to the existing notion of compound (mixed) counting distributions, where the sequence $X$ is considered to be i.i.d. and mutually independent of $N$ (see \cite{gr}, Chapter 8, and \cite{lm3ar}, p. 4), in our case $X$ is only conditionally i.i.d. and conditionally mutually independent of $N$. This leads to the concept of \textit{mixed compound distributions} (see Section \ref{cd}), which constitute a strong generalization of compound (mixed) distributions. \smallskip

Under the above framework, this work studies the mixed counterpart of the original ${\bf Panjer}(a,b;0)$ class and the corresponding compound distributions, by allowing  the parameters of the claim number distribution $P_N$ to be measurable functions of a random vector, the claim size process $X$ to be  conditionally i.i.d. and conditionally mutually independent of $ N $. The above assumptions allow for a (possible) correlation (positive or negative) between $ N $ and $ X $,  as well as  correlation among the random variables of $X$. In addition, we consider the case of compound Panjer distributions with exchangeable claims.  Henceforth, we denote by ${\bf Panjer}(a(\vT),b(\vT);0)$ the mixed Panjer class, where $\vT$ is a $ d $-dimensional ($ d \in \N $) random vector (also referred as structural parameter). \smallskip

The rest of this paper is organized as follows. Section \ref{mcnd} provides a characterization for the members of  ${\bf Panjer}(a(\vT),b(\vT);0)$ class in terms of regular conditional probabilities, and a recursive algorithm for the computation of their pmf
Section \ref{cd} studies the probabilistic aspects of mixed compound distributions. Section \ref{rcd} presents a recursive algorithm for the computation of $P_S$ in the case that $P_N\in{\bf Panjer}(a(\vT),b(\vT);0)$ 
and  the claim size distribution is concentrated on $\N_0$. Numerical applications are also given in Section \ref{rcd}, whereas Section \ref{conc} concludes the paper.

\section{The class of mixed Panjer claim number distributions}\label{mcnd}

$\N$ and $\R$ stand for the natural and the real numbers, respectively, while  $\N_0:=\N\cup\{0\}$ and  $\R_+:=\{x\in\R: x\geq{0}\}$. If $d\in\N$, then $\R^d$ denotes the Euclidean space of dimension $d$. Given a topology $\mathfrak{T}$  on a set $\vY$, write $\B(\vY)$ for its Borel $\sigma$-algebra on $\vY$, i.e., the $\sigma$-algebra generated by $\mathfrak{T}$. Also $\B:=\B(\R)$ and $\B(\alpha,\beta):=\B\big((\alpha,\beta)\big)$, where $\alpha,\beta\in\R$ with $\alpha<\beta$, are the Borel $\sigma$-algebras of subsets of $\R$ and $(\alpha,\beta)$, respectively. For a non-empty set $A$, denote by $\1_A$ and by $\mathcal P(A)$ its indicator function and its power set, respectively. Also, for a map $f:A\rightarrow{E}$ and a non-empty set $B\subseteq{A}$, write $f{\uph} B$ to denote the restriction of $f$ to $B$. Given two measurable spaces $(\vO,\vS)$ and  $(\vY,T)$, as well as a $\vS$-$T$-measurable map $Z:\vO\to\vY$, denote by $\sigma(Z):=\{Z^{-1}[B] : B\in T\}$ the $\sigma$-algebra generated by $Z$. 
\smallskip 

\textit{Throughout what follows  $(\vO,\vS,P)$ is an arbitrary probability space and  $N$ is a  random variable on $\vO$  taking values in a subset $R_N$ of \ $\N_0$  containing  $0$.}\smallskip

\cite{sj}  proved  that the only non-degenerate members of the original Panjer class of claim number distributions ${\bf Panjer}(a,b;0)$ are the  Poisson, the binomial and the negative binomial distributions (see \cite{sj}, Theorem 1). These distributions will be referred as {\bf basic claim number distributions}. Since the members of ${\bf Panjer}(a,b;0)$ satisfy the recursive condition \eqref{orpa}, an interesting question  is whether their mixtures also satisfy a recursive relation. The previous question  traces back to \cite{su}, but for a more general family of claim number distributions, where it seems not to have a definite answer. \smallskip

A way to model the mixtures of claim number distributions is to assume the existence of a random variable (or more generally of a random vector)   $\vT$  on $\vO$ with values in $D\subseteq [0,\infty)$, such that the conditional distribution of $N$ given $\vT$ satisfies condition $P_{N\mid\vT}={\bf K}(\vT)$ $P{\uph}\sigma(\vT)$-almost surely (written a.s. for short), where ${\bf K}(\vT)$ is a conditional  distribution (cf., e.g., \cite{lmt1}, p. 455, for the definition of a conditional distribution). A claim number distribution $P_N$ that satisfies the previous condition  is said to be a {\bf mixed claim number distribution}.  This motivates us to provide the following definitions within the class of mixed claim number distributions.   

\begin{dfs}\normalfont
\label{micnd}
Let $\vT$ be a $d$-dimensional random vector with values in $D\subseteq \R^d$. A random variable $N$ is distributed according to: \smallskip
	
\noindent \textbf{(a)}  the {\bf mixed Poisson distribution} with structural parameter $\xi(\vT)$ (written {\bf MP}$\big(\xi(\vT)\big)$ for short), where $\xi$ is a $\B(D)$-$\B(\R_+)$-measurable function,  if  
\[
p_n(\vT):=P(N=n\mid\vT)=e^{-\xi(\vT)}\cdot\frac{\big(\xi(\vT)\big)^n}{n!}\quad P{\uph}\sigma(\vT)\text{-a.s.}
\] 
for any $n\in\N_0$; \smallskip
	
\noindent \textbf{(b)} the \textbf{mixed binomial}  distribution  with structural parameters $z_1(\vT)$ and $z_2(\vT)$ (denoted by {\bf MB}$\big(z_1(\vT),z_2(\vT)\big)$ for short), where $z_1$ and  $z_2$ are $\B(D)$-$\mathcal P(\N_0)$- and  $\B(D)$-$\B(0,1)$-measurable functions, respectively, if 
\[
p_n(\vT)=\binom{z_1(\vT)}{n}\cdot \big(z_2(\vT)\big)^n\cdot \big(1-z_2(\vT)\big)^{z_1(\vT)-n} \quad P{\uph}\sigma(\vT)\text{-a.s.}
\]
for any $n\in\{0,1,\ldots,z_1(\vT)\}$ $P{\uph}\sigma(\vT)$-a.s.. In particular, if $z_1(\vT)$ or $z_2(\vT)$  is degenerate, simply write {\bf MB}$\big(m,z_2(\vT)\big)$ ($m\in\N$) or {\bf MB}$\big(z_1(\vT),p\big)$ ($p\in(0,1)$), respectively; \smallskip
	
\noindent \textbf{(c)}  the \textbf{mixed negative binomial}  distribution  with structural parameters $\rho_1(\vT)$ and $\rho_2(\vT)$ (written {\bf MNB}$\big(\rho_1(\vT),\rho_2(\vT)\big)$ for short), where $\rho_1$ and  $\rho_2$ are $\B(D)$-$\B(\R_+)$- and  $\B(D)$-$\B(0,1)$-measurable functions, respectively, if  
\[
p_n(\vT)=\frac{\Gamma\big(\rho_1(\vT)+n\big)}{n!\cdot\Gamma\big(\rho_1(\vT)\big)} \cdot  \big(\rho_2(\vT)\big)^{\rho_1(\vT)}\cdot \big(1-\rho_2(\vT)\big)^{n}   \quad P{\uph}\sigma(\vT)\text{-a.s.}
\]
for any $n\in\N_0$. In particular, if $\rho_1(\vT)$ or $\rho_2(\vT)$ is degenerate, simply write {\bf MNB}$\big(r,\rho_2(\vT)\big)$ ($r>0$) or {\bf MNB}$\big(\rho_1(\vT),p\big)$ ($p\in(0,1)$), respectively.
\end{dfs} 

\textit{In what follows, unless stated otherwise, $\vT$ is a $d$-dimensional random vector with values in $D\subseteq \R^d$ and $\xi, z_1,z_2,\rho_1$ and $\rho_2$ are as in Definitions \ref{micnd}.}

\begin{df} \label{mp} 
\normalfont
A claim number distribution $P_N$ belongs to the class ${\bf Panjer}(a(\vT),b(\vT);0)$ if, the conditional distribution of $N$  given  $\vT$ satisfies for every $n\in R_N\setminus\{0\}$ the condition 
\begin{gather}
p_n(\vT)=\bigg(a(\vT)+\frac{b(\vT)}{n}\bigg)\cdot p_{n-1}(\vT)\quad P{\uph}\sigma(\vT)\text{-a.s.},
\label{mpa1}
\end{gather}
where  $a$ and $b$ are real-valued $\B(D)$-measurable functions. 
\end{df}

\begin{rems}
	\label{mpar}
	\normalfont
	{\bf (a)} In the special case $P_\vT=\delta_{\theta_0}$, where  $\delta_{\theta_0}$ denotes the Dirac measure concentrated on $\theta_0\in D$, the class ${\bf Panjer}(a(\theta_0),b(\theta_0);0)$  coincides with the original Panjer class of claim number distributions. \smallskip
	
	\noindent {\bf (b)} Recursive formulas for mixed claim number distributions can be found also in \cite{gsw}, Lemma 5.7, but in a different context from condition \eqref{mpa1}. More precisely, these recursive formulas arise from a change of distributions technique for the mixing distribution. 
\end{rems}

Since the definition of ${\bf Panjer}(a(\vT),b(\vT);0)$ involves conditioning, it is natural to expect that the notion of regular conditional probabilities (also known as disintegrations) will play a crucial role in order to avoid trivialities and to treat conditioning in a rigorous way.  The following definition is a special instance of \cite{fr4}, Definition 452E, appropriately adapted for our purposes (see also \cite{lm3}, Definition 3.1).

\begin{df}\label{rcp} 
\normalfont
Let $(\vY,T,Q)$ be a probability space. A family $\{P_y\}_{y\in \vY}$ of probability measures on $\vS$ is called a {\bf regular conditional probability} (written rcp for short) of $P$ over $Q$  if
\begin{itemize}
\item[{\bf(d1)}] for each $E\in\vS$ the map $y\mapsto P_y(E)$ is $T$-measurable;
\item[{\bf(d2)}] $\int P_{y}(E)\,Q(dy)=P(E)$ for each $E\in\vS$.
\end{itemize}
	
If $f:\vO\to\vY$ is an inverse-measure-preserving function (i.e., $P\big(f^{-1}[B]\big)=Q(B)$ for each $B\in T$), an rcp $\{P_y\}_{y\in \vY}$ of $P$ over $Q$ is called \textbf{consistent} with $f$ if, for each $B\in T$, the equality $P_y\big(f^{-1}[B]\big)=1$ holds for $Q$-almost all (written a.a. for short) $y\in B$. 
\end{df}

The following result is taken from  \cite{lm1v}, and serves as a basic tool for the proofs of the upcoming results.  In order to present it, denote by $\circ$  the function composition operator.

\begin{lem}
	\label{35}
	{\normalfont(\cite{lm1v},  Lemma 3.5)} Let $(\vY,T,Q)$ be a probability space and  $f:\vO\to\vY$ be an inverse-measure-preserving function. Put $\sigma(f):=\{f^{-1}[B] : B\in T\}$ and suppose that an rcp $\{P_y\}_{y\in \vY}$  of $P$ over $Q$  consistent with $f$ exists. Then, for each $A\in\vS$ and $B\in T$ the following hold true:
	\begin{enumerate}
		\item $\E_P[g\mid\sigma(f)] =\E_{P_\bullet}[g]\circ f $ $P{\uph}\sigma(f)$-a.s., where $g$ is a $\vS$-$T$-measurable function such that $\int g\,dP$ is defined on $\R\cup\{-\infty,\infty\}$;
		\item$\int_B P_y(A)\,Q(dy)=\int_{f^{-1}[B]}\E_P[\1_A\mid\sigma(f)]\,dP=P\big(A\cap f^{-1}[B]\big)$.
	\end{enumerate} 
\end{lem}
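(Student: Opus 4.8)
The plan is to prove the substantive first equality of part (2) first, deduce the (trivial) second equality from the defining property of conditional expectation, and then bootstrap part (1) from the indicator case via the image-measure change-of-variables formula together with the standard approximation machinery.

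First I would exploit consistency to localize the measures $P_y$. Applying the consistency hypothesis to $B$ gives $P_y\big(f^{-1}[B]\big)=1$ for $Q$-a.a.\ $y\in B$, while applying it to the complement $\vY\setminus B$, together with the identity $f^{-1}[\vY\setminus B]=\vO\setminus f^{-1}[B]$, gives $P_y\big(f^{-1}[B]\big)=0$ for $Q$-a.a.\ $y\notin B$; hence $P_y\big(f^{-1}[B]\big)=\1_B(y)$ for $Q$-a.a.\ $y$. From this, for fixed $A\in\vS$, monotonicity of the measure $P_y$ yields $P_y\big(A\cap f^{-1}[B]\big)=\1_B(y)\,P_y(A)$ for $Q$-a.a.\ $y$ (on $B$ the set $f^{-1}[B]$ carries full $P_y$-mass, so intersecting $A$ with it loses nothing, whereas off $B$ the set $f^{-1}[B]$ is $P_y$-null). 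Integrating this identity against $Q$ and invoking property {\bf(d2)} on the set $A\cap f^{-1}[B]\in\vS$ gives $\int_B P_y(A)\,Q(dy)=\int P_y\big(A\cap f^{-1}[B]\big)\,Q(dy)=P\big(A\cap f^{-1}[B]\big)$, which is the first equality of (2). The second equality is immediate: $\E_P[\1_A\mid\sigma(f)]$ is $\sigma(f)$-measurable and $f^{-1}[B]\in\sigma(f)$, so by definition of conditional expectation $\int_{f^{-1}[B]}\E_P[\1_A\mid\sigma(f)]\,dP=\int_{f^{-1}[B]}\1_A\,dP=P\big(A\cap f^{-1}[B]\big)$.

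For part (1) I would start with $g=\1_A$. Here $\E_{P_\bullet}[\1_A]\circ f$ is the map $\omega\mapsto P_{f(\omega)}(A)$, which is $\sigma(f)$-measurable because $y\mapsto P_y(A)$ is $T$-measurable by {\bf(d1)}. To identify it with $\E_P[\1_A\mid\sigma(f)]$ it remains to verify $\int_C P_{f(\omega)}(A)\,P(d\omega)=P(A\cap C)$ for every $C\in\sigma(f)$. Writing $C=f^{-1}[B]$ and using that $f$ is inverse-measure-preserving (so $Q$ is the image measure $P\circ f^{-1}$), the change-of-variables formula gives $\int_{f^{-1}[B]}P_{f(\omega)}(A)\,P(d\omega)=\int_B P_y(A)\,Q(dy)$, which equals $P\big(A\cap f^{-1}[B]\big)$ by the first equality of (2) just established. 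This proves (1) for indicators; by linearity it extends to simple functions, by the monotone convergence theorem (applied both to $\int g_n\,dP_y$ pointwise in $y$ and to the conditional expectations $\E_P[g_n\mid\sigma(f)]$) it passes to non-negative $\vS$-measurable $g$, and the decomposition $g=g^+-g^-$ finally covers any $g$ for which $\int g\,dP$ is defined.

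The main obstacle is the careful bookkeeping of the ``$Q$-almost all'' qualifiers in the localization step: consistency is stated only as the one-sided condition ``for a.a.\ $y\in B$'', so one must separately invoke it for $\vY\setminus B$ to pin down $P_y$ off $B$, and then confirm that the union of the resulting exceptional sets is $Q$-null so that the pointwise identity $P_y\big(A\cap f^{-1}[B]\big)=\1_B(y)\,P_y(A)$ may be integrated. The remaining points, namely the accumulation of exceptional null sets along the approximating sequence in the monotone-convergence argument and the handling of possibly infinite values in the $g^+,g^-$ decomposition, are routine.
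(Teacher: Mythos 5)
The paper does not prove this lemma at all: it is imported verbatim from \cite{lm1v} (Lemma 3.5 there) and used as a black box, so there is no in-paper proof to compare against. Your argument is correct and self-contained: the localization $P_y\big(f^{-1}[B]\big)=\1_B(y)$ for $Q$-a.a.\ $y$ (obtained by applying consistency to both $B$ and $\vY\setminus B$) combined with property {\bf(d2)} gives part (2); the identification of $P_{f(\cdot)}(A)$ as a version of $\E_P[\1_A\mid\sigma(f)]$ via the image-measure change of variables gives part (1) for indicators; and the usual indicator--simple--monotone--$g^+{-}g^-$ ladder, with the countable union of exceptional null sets and the observation that $\E_P[g^+]<\infty$ forces $\E_{P_y}[g^+]<\infty$ for $Q$-a.a.\ $y$, completes the general case. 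This is the standard disintegration argument one would expect behind the cited result, and I see no gap in it.
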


{\em Henceforth, $\{P_\theta\}_{\theta\in D}$ is an  rcp of $P$ over $P_\vT$ consistent with $\vT$.  For any $n\in R_N$ and $\theta\in D$, set $p_n(\theta):=P_\theta(N=n)$.}

\begin{rems}
	\label{rcprem}
	\normalfont
	\textbf{(a)} The use of rcp's in applied probability has been criticized mainly due to the fact that their existence is not always guaranteed (cf., e.g., \cite{sto}, Subsection 2.4). However,  if $\vS$ is countably generated and $P$ is perfect (see \cite{fr4}, 451A(d), for the definition of a perfect measure), then there always exists an rcp $\{P_y\}_{y\in\vY}$ of $P$ over $Q$ consistent with every inverse-measure-preserving function $f$ from $\vO$ into $\vY$, provided that $T$ is also countably generated (see \cite{faden}, Theorems 6 and 3). Note that the most important applications in actuarial science are still rooted in the case that $\vO$ is a Polish space (cf., e.g., \cite{Co}, p. 239, for its definition), where such rcp's always exist. In particular, $\R^d$ and $\R^{\N}$ are typical examples of Polish spaces appearing in applications (see also \cite{lm3}, Remark 3.2). For an excellent review on rcp's and their applications, we refer the interested reader to \cite{chpo}.\smallskip
	
\noindent {\bf (b)} Each of the probability measures $P_\theta$ of the corresponding rcp can be interpreted as a version of the conditional probability $P(\bullet\mid\vT=\theta)$,  which is well-defined, up to an a.s. equivalence, as a function of $\theta$, while the consistency of $\{P_\theta\}_{\theta\in D}$ with $\vT$ can be interpreted as the concentrating property $P(\vT\neq\theta\mid \vT=\theta)=0$ for any $\theta\in D$ of conditional probabilities.\smallskip
	
\noindent \textbf{(c)} Assume that $\vT$ is a positive random variable and let $P_N$ be a mixed claim number distribution. If the map $\theta\mapsto\E_{P_\theta}[N]$ is increasing (resp. decreasing) for $P_\vT$-a.a. $\theta>0$, then the random variables $N$ and $\vT$ are $P$-positively (resp. negatively) correlated.\smallskip
	
	In fact, assume first that the map $\theta\mapsto\E_{P_\theta}[N]$ is increasing for $P_\vT$-a.a. $\theta>0$.  By applying Lemma \ref{35}, we get that $\E_P[N\cdot\vT]=\E_{P_\vT}\big[\theta\cdot\E_{P_\theta}[N]\big]$, implying together with \cite{Sc2014}, Theorem 2.2, and Lemma \ref{35}, that  $\E_P[N\cdot\vT]\geq\E_P[N]\cdot \E_P[\vT]$; hence $N$ and $\vT$ are positively correlated. The proof for a decreasing map $\theta\mapsto\E_{P_\theta}[N]$ is similar.
\end{rems}

It is natural to ask which are the members of {\bf Panjer}$(a(\vT),b(\vT);0)$, and if  they can be characterized in terms of the basic claim number distributions. By applying Lemma \ref{35}, the next result yields a characterization for the members of the ${\bf Panjer}(a(\vT),b(\vT);0)$ class via rcp's, connecting in this way the defining property of the original Panjer class and of its mixed counterpart. This characterization also serves as a useful tool for proving the results presented throughout the paper. 
\begin{prop}
\label{mpanchar}
The following statements are equivalent:
\begin{enumerate}
\item $P_N\in {\bf Panjer}(a(\vT),b(\vT);0)$;
\smallskip
\item $(P_\theta)_N\in {\bf Panjer}(a(\theta),b(\theta);0)$ for $P_\vT$-a.a. $\theta\in D$. 
\end{enumerate}
\end{prop}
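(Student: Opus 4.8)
The plan is to treat Lemma \ref{35}(1) as the bridge linking the conditional recursion of Definition \ref{mp} with the deterministic recursion satisfied by the rcp. Applying that lemma to the indicator $g=\1_{\{N=n\}}$, and using $\E_{P_\theta}[\1_{\{N=n\}}]=P_\theta(N=n)=p_n(\theta)$, I obtain
\[
P(N=n\mid\vT)=\E_P[\1_{\{N=n\}}\mid\sigma(\vT)]=\E_{P_\bullet}[\1_{\{N=n\}}]\circ\vT=p_n(\bullet)\circ\vT\quad P\uph\sigma(\vT)\text{-a.s.}
\]
for each $n\in R_N$; that is, the random variable $p_n(\vT)=P(N=n\mid\vT)$ of Definition \ref{mp} coincides a.s.\ with the function $\theta\mapsto P_\theta(N=n)$ composed with $\vT$. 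Since $R_N\subseteq\N_0$ is countable, the countably many exceptional $P$-null sets (one per $n$) can be merged into a single $P$-null set, so this identification holds simultaneously for all $n\in R_N$ off one null set. This is the step that converts the conditional recursion in $\omega$ into a pointwise recursion in the argument $\vT(\omega)$.

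For (i) $\Rightarrow$ (ii), I would rewrite \eqref{mpa1}, via the identification above, as $p_n(\bullet)\circ\vT=\big(a(\vT)+b(\vT)/n\big)\cdot p_{n-1}(\bullet)\circ\vT$ $P\uph\sigma(\vT)$-a.s., for each $n\in R_N\setminus\{0\}$. Introducing the set
\[
B_n:=\Big\{\theta\in D: p_n(\theta)=\big(a(\theta)+b(\theta)/n\big)\cdot p_{n-1}(\theta)\Big\},
\]
which lies in $\B(D)$ because $\theta\mapsto p_n(\theta)=P_\theta(N=n)$ is measurable by property {\bf (d1)} of Definition \ref{rcp} and $a,b$ are $\B(D)$-measurable, the a.s.\ statement says exactly that $\vT^{-1}[D\setminus B_n]$ is $P$-null. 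Since $\vT$ is inverse-measure-preserving, $P_\vT(D\setminus B_n)=P\big(\vT^{-1}[D\setminus B_n]\big)=0$, hence $P_\vT(B_n)=1$. Intersecting over the countably many $n\in R_N\setminus\{0\}$ gives $B:=\bigcap_n B_n$ with $P_\vT(B)=1$, and on $B$ the deterministic recursion defining ${\bf Panjer}(a(\theta),b(\theta);0)$ holds for every $n$, which is (ii).

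For the converse (ii) $\Rightarrow$ (i), I would run the argument backwards: assuming $p_n(\theta)=\big(a(\theta)+b(\theta)/n\big)p_{n-1}(\theta)$ for $P_\vT$-a.a.\ $\theta$ and every $n\in R_N\setminus\{0\}$, the pullback of each (measurable) exceptional set under the inverse-measure-preserving map $\vT$ is again $P$-null, so $p_n(\bullet)\circ\vT=\big(a(\vT)+b(\vT)/n\big)p_{n-1}(\bullet)\circ\vT$ $P\uph\sigma(\vT)$-a.s. Re-applying the bridge identification $p_n(\bullet)\circ\vT=P(N=n\mid\vT)$ then recovers \eqref{mpa1}, i.e.\ (i).

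I expect the main difficulty to be bookkeeping rather than substance: transporting statements between the ``$P\uph\sigma(\vT)$-a.s.'' world (quantified over $\omega\in\vO$) and the ``$P_\vT$-a.a.\ $\theta$'' world (quantified over $\theta\in D$) while keeping the null sets aligned. The two genuinely load-bearing facts are the inverse-measure-preserving identity $P_\vT(\bullet)=P(\vT^{-1}[\bullet])$, which makes this transport an exact equivalence in both directions, and the countability of $R_N$, which is what permits the per-$n$ almost-sure equalities to be collapsed into one; without countability the intersection $\bigcap_n B_n$ need not retain full $P_\vT$-measure.
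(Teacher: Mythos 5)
Your proof is correct and takes essentially the same route as the paper's: both arguments rest on Lemma \ref{35}(i) to identify the conditional probability $P(N=n\mid\vT)$ with the composition $p_n(\bullet)\circ\vT$, and then transfer $P$-a.s.\ statements on $\vO$ into $P_\vT$-a.e.\ statements on $D$ using the fact that $P_\vT(\bullet)=P\big(\vT^{-1}[\bullet]\big)$. The only mechanical difference is that the paper carries out this transfer by testing integrals against all sets $F\in\B(D)$, whereas you pull back the measurable exceptional sets $D\setminus B_n$ directly (and you make explicit the countable merging of null sets over $n\in R_N$, which the paper leaves implicit).
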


\begin{proof}
Fix on arbitrary $n\in R_N\setminus\{0\}$ and $F\in\B(D)$. Statement (i) is equivalent to 
\[
\E_P\big[\1_{\vT^{-1}[F]}\cdot p_n(\vT)\big]\, dP=\E_{P}\bigg[\1_{\vT^{-1}[F]}\cdot \bigg(a(\vT)+\frac{b(\vT)}{n}\bigg)\cdot p_{n-1}(\vT)\bigg]
\] 
or, according to Lemma \ref{35}(i),  to 
\[
\E_{P_\vT}\big[\1_F\cdot p_n(\theta)\big]=\E_{P_\vT}\bigg[\1_{F}\cdot \bigg(a(\theta)+\frac{b(\theta)}{n}\bigg)\cdot p_{n-1}(\theta)\bigg];
\] 
hence we equivalently get  condition 
\[
p_{n}(\theta)= \bigg(a(\theta)+\frac{b(\theta)}{n}\bigg)\cdot p_{n-1}(\theta) \quad\text{for } P_\vT\text{-a.a. } \theta\in D,
\] 
which is equivalent to statement (ii). This completes the  proof.
\end{proof}

\begin{rem}
\label{elem}
\normalfont
In other words, Proposition \ref{mpanchar} demonstrates that a claim number distribution $P_N$ is an element of ${\bf Panjer}(a(\vT),b(\vT);0)$ if and only if it is a mixture of a $P_\theta$-basic claim number distribution. The next table summarizes the members of the mixed Panjer family.
\begin{table}[H]
\centering
\caption{The members of ${\bf Panjer}(a(\vT),b(\vT);0)$ and their corresponding functions $a$ and $b$.}
\def\arraystretch{1.5}
\begin{tabular}{| c | c | c | }
\hline
$P_N$ & $a(\vT)$ & $b(\vT)$    \\   \hline  
${\bf MP}\big(\xi(\vT)\big)$  & 0 & $\xi(\vT)$    \\ \hline 
${\bf MB}\big(z_1(\vT),z_2(\vT)\big)$ & $-\frac{z_2(\vT)}{1-z_2(\vT)}$ & $\big(z_1(\vT)+1\big)\cdot \frac{z_2(\vT)}{1-z_2(\vT)}$  \\ \hline
${\bf MNB}\big(\rho_1(\vT),\rho_2(\vT)\big)$ & $1-\rho_2(\vT)$ & $\big(\rho_1(\vT)-1\big)\cdot\big(1-\rho_2(\vT)\big)$  \\
\hline
\end{tabular}
\label{pt}
\end{table}
 	
\end{rem}

Recall that the so-called Neyman Type A distribution arises as a  mixed Poisson  distribution with Poisson distributed structure parameter (cf., e.g., \cite{jkk}, Section 9.6 for more details). The next example presents a recursive formula for such a mixture of distributions (see also \cite{beal}, condition (4)) by applying the characterization appearing in Proposition \ref{mpanchar}.
\begin{ex}
	\label{MP27}
	\normalfont
	Let $D=\N_0$ and take $P_{N}={\bf MP}(\vT)$  with $P_\vT={\bf P}(a)$ ($a>0$). Fix an arbitrary $n\in\N$. Applying Proposition \ref{mpanchar}, along with Lemma \ref{35}, yields
	\[
	p_n=\frac{a\cdot e^{-1}}{n}\cdot\sum_{k=0}^{n-1}\frac{1}{k!}\cdot\bigg(\sum_{\theta=0}^\infty  e^{-\theta}\cdot\frac{\theta^{n-1-k}}{(n-1-k)!}\cdot e^{-a}\cdot \frac{a^{\theta}}{\theta!}\bigg)
	\]
	which implies
	\[
	p_n=\frac{a\cdot e^{-1}}{n}\cdot\sum_{k=0}^{n-1}\frac{1}{k!}\cdot p_{n-1-k}.
	\]
	The initial value for the recursion is given by the formula
	\[
	p_0=\E_P\big[e^{-\vT}\big]=e^{-a\cdot(1-e^{-1})}.
	\]
\end{ex}

\begin{rem}
	\label{mpoisson}
	\normalfont 
	In the case that $N$ is \textbf{mixed Poisson distributed with structure distribution $U$}, i.e., 
	\[
	p_n=\int_0^\infty e^{-\theta}\cdot \frac{\theta^n}{n!}\, U(d\theta)\quad\text{ for any }n\in\N_0
	\]
(cf., e.g., \cite{lmt1}, Definitions 3.1(b)), \cite{wil} and \cite{hessb} have provided some important results concerning recursive formulas, under the additional assumption that the logarithm of the mixing density can be written as the ratio of two polynomials.  However, these  results cannot, in general, be transferred to the class of  \textbf{MP}$(\vT)$, as it is not always possible given an arbitrary probability space $(\vO,\vS,P)$ and a probability distribution $U$ to construct a random variable $\vT$ on $\vO$, such that $P_\vT=U$ (see \cite{lm3},  Remark 4.5). Even if one assumes the  existence of $\vT$, it is not, in general, possible to construct an rcp of $P$ over $U$ consistent with $\vT$ (see \cite{lmt1}, Examples 4 and 5).  Nevertheless, for the most interesting cases appearing in applications,  there exist a random variable $\vT$ such that $P_\vT=U$ and an rcp of $P$ over $U$ consistent with $\vT$ (see \cite{lm5}, Theorem 3.1, and \cite{lmt1}, Examples 1, 2 and 3). 
\end{rem}

It is evident that if $P_N\in{\bf Panjer}(a(\vT),b(\vT);0)$, then by applying Proposition \ref{mpanchar} we get 
\[
p_n=\int_D \bigg(a(\theta)+\frac{b(\theta)}{n}\bigg)\cdot p_{n-1}(\theta)\,P_\vT(d\theta)\quad\text{for any } n\in R_N{\setminus}\{0\}.
\]   
For various choices of the functions $a$ and $b$ (see Table \ref{pt}), we can obtain recursive  formulas for the computation of the probabilities $p_n$, see e.g., \cite{gdsco}, \cite{wil} and \cite{zwang}.  However a unified formula for these recursions does not exist in the literature; hence a question that naturally arises is whether we can obtain a recursion of the form 
\[
p_n=C_n\cdot p_{n-1} \quad\text{for any } n\in R_N{\setminus}\{0\},
\]	
where $C_n\in(0,\infty)$, for the members of ${\bf Panjer}(a(\vT),b(\vT);0)$. The next theorem introduces an appropriate rcp of $P$ over $P_N$ consistent with $N$ in order to obtain such a recursive formula.
\begin{thm}
	\label{rec}
	Let $P_N\in {\bf Panjer}(a(\vT),b(\vT);0)$. There exists a family  $\{\mu_n\}_{n\in{R}_N}$ of probability measures on $\vS$ defined by means of
	\[
	\mu_n(A):=\frac{\E_P[\1_A\cdot\1_{\{N=n\}}]}{\E_P[\1_{\{N=n\}}]}\quad\text{for any } A\in\vS
	\] 
	being an rcp of $P$ over $P_N$ consistent with $N$, such that for any $n\in{R}_N\setminus\{0\}$ and $F\in\B(D)$,  the condition
 \[
p_n=C_n\cdot p_{n-1}
 \]
holds true, where 
 \[
 C_n:=\E_{\mu_{n-1}}\bigg[a(\vT)+\frac{b(\vT)}{n}\bigg].
 \]
\end{thm}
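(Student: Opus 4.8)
The plan is to establish the two assertions of the theorem in turn: first that $\{\mu_n\}_{n\in R_N}$ is an rcp of $P$ over $P_N$ consistent with $N$, and then the recursion $p_n=C_n\cdot p_{n-1}$. Throughout, the disintegration is taken over the countable measurable space $\big(R_N,\mathcal P(R_N),P_N\big)$.

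For the rcp claim I would proceed along Definition \ref{rcp}. The map $N$ is inverse-measure-preserving by definition of the image measure, since $P\big(N^{-1}[B]\big)=P_N(B)$. Condition \textbf{(d1)} is automatic because $R_N\subseteq\N_0$ is countable, so every map $n\mapsto\mu_n(E)$ is $\mathcal P(R_N)$-measurable. For \textbf{(d2)} I would compute $\int\mu_n(E)\,P_N(dn)=\sum_{n\in R_N}\mu_n(E)\cdot p_n$ and use $\E_P[\1_{\{N=n\}}]=p_n$ together with $\sum_{n\in R_N}\1_{\{N=n\}}=1$ $P$-a.s.\ to telescope the sum to $\E_P[\1_E]=P(E)$. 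Consistency follows from the concentrating identity $\mu_n(\{N=n\})=1$, valid whenever $p_n>0$ (hence for $P_N$-a.a.\ $n$); since $\{N=n\}\subseteq N^{-1}[B]$ for every $n\in B$, this forces $\mu_n\big(N^{-1}[B]\big)=1$ for $P_N$-a.a.\ $n\in B$.

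For the recursion I would first record the elementary identity $\E_{\mu_{n-1}}[Y]=\E_P\big[Y\cdot\1_{\{N=n-1\}}\big]/p_{n-1}$, which follows from the definition of $\mu_{n-1}$ on indicators extended by linearity and monotone convergence. This gives $C_n\cdot p_{n-1}=\E_P\big[\big(a(\vT)+\tfrac{b(\vT)}{n}\big)\cdot\1_{\{N=n-1\}}\big]$. The key step is to move this expectation to the $\vT$-disintegration $\{P_\theta\}_{\theta\in D}$: since $a(\vT)+\tfrac{b(\vT)}{n}$ is $\sigma(\vT)$-measurable, the tower property lets me pull it out of the conditional expectation, and Lemma \ref{35}(i) identifies $\E_P[\1_{\{N=n-1\}}\mid\sigma(\vT)]$ with $p_{n-1}(\vT)$, yielding $C_n\cdot p_{n-1}=\int_D\big(a(\theta)+\tfrac{b(\theta)}{n}\big)\cdot p_{n-1}(\theta)\,P_\vT(d\theta)$. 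Now Proposition \ref{mpanchar} applies: because $P_N\in{\bf Panjer}(a(\vT),b(\vT);0)$, the integrand equals $p_n(\theta)$ for $P_\vT$-a.a.\ $\theta\in D$, and a final use of Lemma \ref{35}(ii) (equivalently, property \textbf{(d2)} of the $\vT$-rcp with $B=D$) gives $\int_D p_n(\theta)\,P_\vT(d\theta)=P(N=n)=p_n$.

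The main obstacle is the bookkeeping in this middle step, namely transferring $\E_P\big[\big(a(\vT)+\tfrac{b(\vT)}{n}\big)\1_{\{N=n-1\}}\big]$ correctly across the two distinct disintegrations (over $P_N$ and over $P_\vT$). One must pull the factor $a(\vT)+\tfrac{b(\vT)}{n}$ out as a $\sigma(\vT)$-measurable quantity before invoking Lemma \ref{35}(i), and integrability is not an issue because after applying Proposition \ref{mpanchar} the integrand coincides a.s.\ with $p_n(\theta)\le 1$. I would also note that the quantifier ``$F\in\B(D)$'' in the statement is vacuous here, since the asserted identity $p_n=C_n\cdot p_{n-1}$ is scalar and carries no dependence on $F$.
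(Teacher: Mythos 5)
Your proposal is correct and follows essentially the same route as the paper: you construct the same rcp $\{\mu_n\}_{n\in R_N}$, verify (d1), (d2) and consistency in the same way, and your middle step (tower property plus Lemma \ref{35}(i) identifying the $P_\vT$-density of $\vT$ under $\mu_{n-1}$ as $p_{n-1}(\theta)/p_{n-1}$, followed by Proposition \ref{mpanchar}) is exactly the paper's change-of-measure computation, merely run in the opposite direction. Your observations that the quantifier ``$F\in\B(D)$'' is vacuous and that integrability follows from $|a(\theta)+b(\theta)/n|\cdot p_{n-1}(\theta)=p_n(\theta)\le 1$ are both sound refinements of points the paper leaves implicit.
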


\begin{proof}
	Fix on an arbitrary $n\in R_N$ and define the set-function $\mu_n:\vS\to(0,\infty)$ by	
	\[
	\mu_n(A):=\frac{\E_P[\1_A\cdot\1_{\{N=n\}}]}{\E_P[\1_{\{N=n\}}]}\quad\text{for any } A\in\vS.
	\] 	
	A standard computation justifies that $\mu_n$ is a probability measure on $\vS$. Let $A\in\vS$ be fixed, but arbitrary. First note that condition (d1) holds since the map $n\mapsto \mu_n(A)$ is  clearly $\mathcal P(R_N)$-measurable while condition (d2) follows from 
	\[
	P(A)=\E_P\big[\E_P[\1_A\mid N]\big]=\int \frac{\E_P[\1_A\cdot\1_{\{N=n\}}]}{\E_P[\1_{\{N=n\}}]}\,dP_N=\int \mu_n(A)\,dP_N;
	\]
	hence the family $\{\mu_n\}_{n\in{R}_N}$ of probability measures is an rcp of $P$ over $P_N$. The consistency of $\{\mu_n\}_{n\in R_N}$ follows immediately by the definition of $\mu_n$. \smallskip
	
As $P_N\in {\bf Panjer}(a(\vT),b(\vT);0)$ and $\{P_\theta\}_{\theta\in D}$ is an rcp of $P$ over $P_\vT$ consistent with $\vT$, we can apply Proposition \ref{mpanchar}, together with Lemma \ref{35}, to get that 
	\begin{gather}
		p_{n+1}=\E_{P_\vT}\bigg[\bigg(a(\theta)+\frac{b(\theta)}{n+1}\bigg)\cdot  p_{n}(\theta)\bigg].
		\label{2121}
	\end{gather}
But	since  
	\begin{align*}
		\E_{\mu_n}[\1_{\vT^{-1}[F]}]=\frac{\E_P[\1_{\vT^{-1}[F]}\cdot\1_{\{N=n\}}]}{\E_P[\1_{\{N=n\}}]}=\frac{\E_{P_\vT}[\1_F\cdot p_{n}(\theta)]}{p_n}\quad\text{for every } F\in\B(D),
	\end{align*} 
where the second equality follows by Lemma \ref{35}, we obtain 
	\[
	\E_{P_\vT}\bigg[\bigg(a(\theta)+\frac{b(\theta)}{n+1}\bigg)\cdot  p_{n}(\theta)\bigg]=\E_{\mu_{n}}\bigg[\bigg(a(\vT)+\frac{b(\vT)}{n+1}\bigg)\bigg]\cdot p_n.
	\]
	The latter  together with condition \eqref{2121} completes the proof. 
\end{proof}

In the next set of examples, we apply Theorem \ref{rec} in order to obtain recursive formulas for the pmf of a mixed counting distribution  that belongs to  ${\bf Panjer}(a(\vT),b(\vT);0)$.  In our first example, we revisit the case of  the  mixed Poisson distribution.  

\begin{ex}\label{MP213}
\normalfont
Fix an arbitrary $n\in\N$ and let $P_{N}={\bf MP}\big(\xi(\vT)\big)$. Since $P_N$ is an element of ${\bf Panjer}(a(\vT),b(\vT);0)$ with $a(\vT)=0$ and $b(\vT)=\xi(\vT)$ (see Table \ref{pt}), we apply Theorem \ref{rec} to get 
\[ 
p_n=\E_{\mu_{n-1}}\bigg[\frac{\xi(\vT)}{n}\bigg]\cdot p_{n-1}=\frac{\E_P\big[\big(\xi(\vT)\big)^n\cdot e^{-\xi(\vT)}\big]}{n\cdot\E_P\big[\big(\xi(\vT)\big)^{n-1}\cdot e^{-\xi(\vT)}\big]}\cdot p_{n-1}.
\]
Setting $\widehat u_{\xi}(t):=\E_P\big[e^{-t\cdot\xi(\vT)}\big]$ ($t\geq 0$),  the latter equality becomes
\begin{gather}
p_n = - \frac{1}{n}\cdot\frac{\widehat u_{\xi}^{(n)}(1)}{\widehat u_{\xi}^{(n-1)}(1)}\cdot  p_{n-1}, 
\label{mp2131} 
\end{gather}
where notation $f^{(n)}$ stands for the $n$-th order derivative of a real-valued function $f$. Note that the initial value for the recursion is 
\[
p_0=\widehat u_{\xi}(1).
\] 
	
In particular, let $D=(0,\infty)$, $\xi=\text{id}_D$, where by $\text{id}_D$ is denoted the identity map on $D$, and consider the gamma distribution with parameters  $\al,\be>0$ (written \textbf{Ga}$(\al,\be)$ for short)  defined as
\[
\textbf{Ga}(\al,\be)(B)=\int_B \frac{\be^\al}{\Gamma(\al)}\cdot x^{\al-1}\cdot e^{-\be\cdot x}\,\lambda(dx)\quad\text{for any } B\in\B(0,\infty),
\]
where $\lambda$ denotes the restriction to $\B(0,\infty)$ of the Lebesgue measure on $\B$. Take  $P_\vT=w_1\cdot{\bf Exp}(\be)+w_2\cdot{\bf Ga}(\al,\be)$ ($\al,\be>0$), where $w_1,w_2\in [0,1]$ with $w_1+w_2=1$.   Since
	\[
	\widehat{u}(t):=\widehat{u}_{\text{id}_D}(t)=w_1\cdot\frac{\be}{\be+t}+w_2\cdot\Big(\frac{\be}{\be+t}\Big)^\al\quad\text{for any } t\geq 0
	\]
	it follows easily that 
	\[
	\widehat{u}^{(n)}(t)=\frac{(-1)^n}{(\be+t)^{\al+n}}\cdot\Big(w_1\cdot n!\cdot\be\cdot(\be+t)^{\al-1}+w_2\cdot(\al)_n\cdot\be^\al \Big)\quad\text{for any } t\geq 0,
	\]
	where $(x)_k$, $x\in\R$ and $k\in\N$ denotes the ascending factorial (cf., e.g., \cite{jkk}, p. 2); hence condition \eqref{mp2131} yields the recursive formula
	\begin{gather}
		p_n=\frac{1}{n\cdot(\be+1)}\cdot\frac{w_1\cdot n!\cdot(\be+1)^{\al-1}+w_2\cdot(\al)_n\cdot\be^{\al-1}}{w_1\cdot (n-1)!\cdot(\be+1)^{\al-1}+w_2\cdot(\al)_{n-1}\cdot\be^{\al-1} } \cdot p_{n-1} 
		\label{lin1}
	\end{gather}
	with initial value
	\begin{gather}
		p_0=w_1\cdot\frac{\be}{\be+1}+w_2\cdot\Big(\frac{\be}{\be+1}\Big)^\al.
		\label{lin0}
	\end{gather}
	
	\noindent In the special case   $w_1=0$ and $w_2=1$, conditions \eqref{lin1} and \eqref{lin0} yield 
	\[
	p_n=\frac{\al+n-1}{n\cdot(\be+1)} \cdot  p_{n-1}\quad\text{with}\quad p_0=\Big(\frac{\be}{\be+1}\Big)^\al,
	\]	
	which is the usual  negative  binomial  recursion (see \cite{pa}, p. 23).\smallskip
	
	Now take $w_1=\frac{\be}{\be+1}$, $w_2=\frac{1}{\be+1}$ and $\al=2$. For this particular case we get 
	\[
	P_\vT(B)=\int_B \frac{\be^2}{\be+1}\cdot(\theta+1)\cdot e^{-\be\cdot\theta}\,\lambda(d\theta)\quad\text{for any } B\in\B(0,\infty)
	\] 
	implying that the structural parameter is distributed according to  the Lindley distribution (cf., e.g., \cite{gan} for the definition and its basic properties).   Conditions \eqref{lin1} and \eqref{lin0} give
	\[
	p_n=\frac{\be+n+2}{(\be+1)\cdot (\be+1+n)} \cdot p_{n-1}\quad\text{with}\quad p_0=\frac{\be^2\cdot(\be+2)}{(\be+1)^3}.
	\]	
\end{ex}

The next example presents a general recursive formula for the ${\bf MB}\big(m,z_2(\vT)\big)$ ($m\in\N$) distribution. As a special case, we obtain a well-known recursive formula for the negative hypergeometric distribution (cf., e.g., \cite{jkk}, Section 6.2.2, for the definition  and its properties) appearing in \cite{hessa}, Example 3.

\begin{ex}
\label{MB213}
\normalfont
Letting  $P_{N}={\bf MB}\big(m,z_2(\vT)\big)$ ($m\in\N$), we fix on an arbitrary $n\in\{1,\ldots, m\}$. Since $P_N\in {\bf Panjer}(a(\vT),b(\vT);0)$ with $a(\vT)=-\frac{z_2(\vT)}{1-z_2(\vT)}$ and $b(\vT)=(m+1)\cdot\frac{z_2(\vT)}{1-z_2(\vT)}$ (see Table \ref{pt}), we can apply Theorem \ref{rec} to get 
\begin{align}
p_n&=\frac{m-n+1}{n}\cdot\E_{\mu_{n-1}}\bigg[\frac{z_2(\vT)}{1-z_2(\vT)}\bigg]\cdot p_{n-1}\nonumber\\
&=\frac{m-n+1}{n}\cdot\frac{\E_P\big[\big(z_2(\vT)\big)^n\cdot\big(1-z_2(\vT)\big)^{m-n}\big]}{\E_P\big[\big(z_2(\vT)\big)^{n-1}\cdot\big(1-z_2(\vT)\big)^{m-n+1}\big]}\cdot  p_{n-1}.
\label{mb2131} 
\end{align}
In this case, the initial value is given  by
\[
p_0=\E_P\big[\big(1-z_2(\vT)\big)^m\big].
\]
	
In particular, let $D=(0,1)$, let $z=\text{id}_D$ and take $P_\vT={\bf Beta}(\alpha,\beta)$ for $\alpha,\beta>0$ (cf., e.g., \cite{Sc}, p. 179, for the definition of the beta distribution). An easy computation yields 
\[
\frac{\E_P[\vT^n\cdot(1-\vT)^{m-n}]}{\E_P[\vT^{n-1}\cdot(1-\vT)^{m-n+1}]}=\frac{\al+n-1}{\be+m-n},
\]
implying, together with condition \eqref{mb2131}, that
\[
p_n=\frac{(m-n+1)\cdot(\al+n-1)}{n\cdot (\be+m-n)}\cdot  p_{n-1}
\]
with initial value
\[
p_0=\prod_{k=0}^{m-1}\frac{\be+k}{\al+\be+k}.
\]
\end{ex}

Similarly to the previous example, in the next one we treat the case $P_N={\bf MNB}\big(r,\rho_2(\vT)\big)$ ($r>0$). As a special instance, a well-known recursion for the generalized Waring distribution (cf., e.g., \cite{jkk}, Section 6.2.3, for the definition  and its properties) is rediscovered (see  \cite{hessa},  Example 4).

\begin{ex}
\label{MNB213}
\normalfont
Let  $P_N={\bf MNB}\big(r,\rho_2(\vT)\big)$, where $r>0$, and fix on arbitrary $n\in\N$. As $P_N\in {\bf Panjer}(a(\vT),b(\vT);0)$ with $a(\vT)=1-\rho_2(\vT)$ and $b(\vT)=(r-1)\cdot \big(1-\rho_2(\vT)\big)$ (see Table \ref{pt}), apply Theorem \ref{rec} to get
\begin{align}
p_{n}&=\frac{r+n-1}{n}\cdot\E_{\mu_{n-1}}\big[\big(1-\rho_2(\vT)\big)\big]\cdot p_{n-1}\nonumber\\
&=\frac{r+n-1}{n}\cdot\frac{\E_P\big[\big(\rho_2(\vT)\big)^r\cdot\big(1-\rho_2(\vT)\big)^{n}\big]}{\E_P\big[\big(\rho_2(\vT)\big)^r\cdot\big(1-\rho_2(\vT)\big)^{n-1}\big]}\cdot p_{n-1},
\label{mnb2131} 
\end{align}
while the initial value is given by condition $p_0=\E_P\big[\big(\rho_2(\vT)\big)^r\big]$.\smallskip
	
	In particular, if $D=(0,1)$, $\rho_2=\text{id}_D$ and $P_\vT={\bf Beta}(\alpha,\beta)$ for $\alpha,\beta>0$, then 
	\[
	\E_P[\vT^r\cdot(1-\vT)^{n}]=\frac{\mathrm{B}(\alpha+r,\beta+n)}{\mathrm{B}(\alpha,\beta)}
	\]
	implying, together with condition \eqref{mnb2131}, that
	\[
	p_{n}=\frac{(r+n-1)\cdot(\be+n-1) }{n\cdot(\al+\be+r+n-1)}\cdot p_{n-1}
	\]
	with initial value
	\[
	p_0=\frac{\mathrm{B}(\alpha+r,\beta)}{\mathrm{B}(\alpha,\beta)}.
	\]
\end{ex}

\section{Mixed compound distributions}\label{cd}

Let $X:=\{X_n\}_{n\in\N}$ be a sequence of random variables on $\vO$ with values in $R_X\subseteq\R_+$, and consider the  random variable $S$ on $\vO$ taking  values in $R_S\subseteq\R_+$, defined by 
\[
S:=\begin{cases}
	\sum_{j=1}^N X_j & \text{, if } N\geq 1\\
	0 &  \text{, if } N=0
\end{cases}.
\]
The sequence $X$ and the random variable $S$ are known as the \textbf{claim size process} and the \textbf{aggregate claims size}, respectively. Recall that a sequence $\{Z_n\}_{n\in\N}$ of random variables on $\vO$ is:
\begin{enumerate}
\item $P$-{\bf conditionally (stochastically) independent given $\vT$}, if for each $n\in\N$ with $n\geq 2$
\[
P\big(Z_{i_1}\leq z_{i_1}, Z_{i_2}\leq z_{i_2},\ldots, Z_{i_n}\leq z_{i_n}  \mid \vT\big)=\prod^{n}_{k=1} P\big(Z_{i_k}\leq z_{i_k}\mid\vT\big)\quad P{\uph}\sigma(\vT)\mbox{-a.s.}
\]
whenever $i_1,\ldots, i_n$ are distinct members of $\N$ and $(z_{i_1},\ldots,z_{i_n})\in\R^n$;
\smallskip
\item {\bf $P$-conditionally identically distributed given $\vT$}, if 
\[
P\big(F\cap Z_k^{-1}[B]\big)=P\big(F\cap Z_m^{-1}[B]\big)
\]
whenever $k,m\in \N$, $F\in\sigma(\vT)$ and $B\in \B$.
\end{enumerate} 

\textit{For the rest of the paper,  we simply write ``conditionally" in the place of ``conditionally given $\vT$” whenever conditioning refers to  $\vT$, and, unless otherwise stated, $X$ is a sequence of $P$-conditionally i.i.d random variables, and $P$-conditionally mutually independent of $N$. }\smallskip

Denote by $\mu$ either the Lebesgue measure $\lambda$ on $\B(0,\infty)$ or the counting measure on $\N_0$, and for any $\theta\in D$ let 
\[
(P_\theta)_{X_1}(B)=\int_B  f_\theta(x)\,\mu(dx)\quad\text{ for any } B\in\B(R_X),
\]
where $f_\theta:=\big(f_\theta\big)_{X_1}$ is the probability (density) function of $X_1$ with respect to $P_\theta$. Since $\{P_\theta\}_{\theta\in D}$ is an rcp of $P$ over $P_\vT$ consistent with $\vT$, it follows by \cite{lm6z3}, Lemma 3.2,  that the latter is equivalent to 
\[
P_{X_1\mid\vT}(B)=\int_B f_{X_1\mid\vT}(x)\,\mu(dx)\quad P{\uph}\sigma(\vT)\text{-a.s.}.
\]

\begin{rem}
	\label{remX}
	\normalfont
If $\vT$ is non-degenerate and if $\int\E^2_P[X_1\mid\vT]\,dP<\infty$, then  the random variables $X_n$, $X_m$ with $n\neq m$ are $P$-positively correlated.\smallskip
	
In fact, for any $n,m\in\N$ such that $n\neq m$, we get
	\[
	\textrm{Cov}_P(X_n,X_m)=\E_P\big[\E_P[X_n\cdot X_m\mid\vT]\big]  -\E^2_P[X_1]=\E_P\big[\E^2_P[X_1\mid\vT] \big]-\E^2_P\big[\E_P[X_1\mid\vT] \big]>0,
	\]
	where the first equality follows by \cite{mt2}, Remark 2.1, while the second one  follows since $X$ is $P$-conditionally i.i.d.; hence $\textrm{Cov}_P(X_n,X_m)=\text{Var}_P\big(\E_P[X_1\mid\vT]\big)> 0$. 
\end{rem}

The next proposition provides an expression for the probability distribution of a sum of $P$-conditionally i.i.d. random variables.

\begin{prop}
\label{con}
If $S_n:=\sum_{j=1}^nX_j$ for any $n\in\N$, then  
\[
P_{S_n}(B)=\int_B\Big(\int_{D} f^{\ast n}_\theta(x) \,P_\vT(d\theta)\Big)\,\mu(dx)\quad\text{for all } B\in\B(R_S),
\]
where $f_\theta^{\ast n}$ denotes the $n$-th convolution of $f_\theta$ with itself.
\end{prop}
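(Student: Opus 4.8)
The plan is to disintegrate the computation of $P_{S_n}(B)$ through the rcp $\{P_\theta\}_{\theta\in D}$ and thereby reduce the statement to the elementary fact that, conditionally on $\vT=\theta$, the partial sum $S_n$ is a sum of i.i.d.\ summands and hence has the $n$-fold convolution density. First I would fix $B\in\B(R_S)$ and write $P_{S_n}(B)=\E_P[\1_{\{S_n\in B\}}]=\E_P\big[\E_P[\1_{\{S_n\in B\}}\mid\sigma(\vT)]\big]$, which is legitimate since $S_n$ is a finite sum of the $X_j$ and so $\{S_n\in B\}\in\vS$. Applying Lemma \ref{35}(i) to the bounded measurable function $g=\1_{\{S_n\in B\}}$ gives $\E_P[\1_{\{S_n\in B\}}\mid\sigma(\vT)]=\E_{P_\bullet}[\1_{\{S_n\in B\}}]\circ\vT$ $P{\uph}\sigma(\vT)$-a.s., whence
\[
P_{S_n}(B)=\int_D P_\theta(S_n\in B)\,P_\vT(d\theta).
\]

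The heart of the argument is to identify $P_\theta(S_n\in B)$ with $\int_B f_\theta^{\ast n}(x)\,\mu(dx)$ for $P_\vT$-a.a.\ $\theta$. For this I would first show that, for $P_\vT$-a.a.\ $\theta$, the variables $X_1,\dots,X_n$ are independent under $P_\theta$ with each $X_j$ carrying the $\mu$-density $f_\theta$. Independence follows by rewriting the defining conditional-independence identity through Lemma \ref{35}(i): both sides of the product formula become functions of $\theta$ composed with $\vT$, so off a $P_\vT$-null set one obtains $P_\theta(X_{1}\le z_1,\dots,X_n\le z_n)=\prod_{k=1}^n P_\theta(X_k\le z_k)$. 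The conditional-identical-distribution hypothesis, again via Lemma \ref{35}(i), forces each one-dimensional marginal of $X_j$ under $P_\theta$ to coincide with $(P_\theta)_{X_1}$, which by assumption has density $f_\theta$. Granting this, a standard induction on $n$ — using that $\mu$ (Lebesgue or counting measure) is translation invariant, so that the law of a sum of two $P_\theta$-independent variables with $\mu$-densities $g_1,g_2$ has $\mu$-density $g_1\ast g_2$ — shows that $S_n$ has $P_\theta$-density $f_\theta^{\ast n}$, i.e.\ $P_\theta(S_n\in B)=\int_B f_\theta^{\ast n}(x)\,\mu(dx)$.

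Finally I would substitute this into the disintegration formula and invoke Tonelli's theorem: since $(\theta,x)\mapsto f_\theta^{\ast n}(x)\ge 0$ is nonnegative, the order of integration with respect to $P_\vT(d\theta)$ and $\mu(dx)$ may be interchanged, giving
\[
P_{S_n}(B)=\int_D\Big(\int_B f_\theta^{\ast n}(x)\,\mu(dx)\Big)P_\vT(d\theta)=\int_B\Big(\int_D f_\theta^{\ast n}(x)\,P_\vT(d\theta)\Big)\mu(dx),
\]
which is the asserted formula.

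The step I expect to be the main obstacle is the passage from the conditional-i.i.d.\ hypothesis (stated as $P{\uph}\sigma(\vT)$-a.s.\ identities) to genuine independence and identical distribution of $X_1,\dots,X_n$ under a single measure $P_\theta$ for $P_\vT$-almost every $\theta$: each individual identity holds only outside a $\theta$-null set that a priori depends on the thresholds $z_1,\dots,z_n$, so one must first restrict to rational thresholds, take the resulting countable union of exceptional sets, and then recover the general case by right-continuity of the distribution functions. One should also keep track of the joint measurability of $(\theta,x)\mapsto f_\theta^{\ast n}(x)$, which is needed for Tonelli to apply; this is routine once the joint measurability of $(\theta,x)\mapsto f_\theta(x)$ — implicit in the definition of the densities $f_\theta$ through the rcp — is granted.
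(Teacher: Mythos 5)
Your proof is correct and follows essentially the same route as the paper's: disintegrate $P_{S_n}$ through the rcp $\{P_\theta\}_{\theta\in D}$ via Lemma \ref{35}, identify $(P_\theta)_{S_n}(B)$ with $\int_B f_\theta^{\ast n}(x)\,\mu(dx)$ for $P_\vT$-a.a.\ $\theta\in D$, and recombine the two integrals. The only difference is that where you prove the transfer from $P$-conditionally i.i.d.\ to $P_\theta$-i.i.d.\ by hand (rational thresholds, countable union of null sets, right-continuity), the paper simply cites \cite{lm6z3}, Lemma 3.4(ii), which asserts exactly this fact.
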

\begin{proof} 
Fix on arbitrary  $n\in\N$. For any $B\in\B(R_S)$ and $F\in\B(D)$, the consistency of $\{P_\theta\}_{\theta\in D}$ with $\vT$, along with Lemma \ref{35}, yields
\[
P\big(S^{-1}_n[B]\cap\vT^{-1}[F]\big)=\int_{F}  (P_\theta)_{S_n} (B)\,P_\vT(d\theta).
\] 
But since $X$ is $P$-conditionally i.i.d.,  it follows by \cite{lm6z3},  Lemma 3.4(ii), that $X$ is $P_\theta$-i.i.d. for $P_\vT$-a.a. $\theta\in D$, implying that
\begin{gather}
(P_\theta)_{S_n} (B)=\int_B f_\theta^{\ast n}(x)\,\mu(dx) ;
\label{con1}
\end{gather}
hence for $F=D$, we get
\[
P_{S_n}(B)=\int_B\Big(\int_{D} f^{\ast n}_\theta(x) \,P_\vT(d\theta)\Big)\,\mu(dx)
\]
which completes the proof.
\end{proof}

The next example shows how  Proposition  \ref{con} can be utilized in order to compute convolutions in the case of $P$-conditionally i.i.d. random variables.

\begin{ex}
\label{par}
\normalfont
Let $D:=(0,\infty)$ and fix on arbitrary $n\in\N$. Assume that $P_{X_1\mid\vT}={\bf Exp}(\vT)$ $P{\uph}\sigma(\vT)$-a.s. and  $P_\vT={\bf Ga}(\al,\be)$ with $\alpha,\beta\in (0,\infty)$. Since by \cite{lm6z3}, Lemma 3.4(ii),  the claim  size process $X$ is $P_\theta$-independent and $(P_\theta)_{X_1}={\bf Exp}(\theta)$ for $P_\vT$-a.a. $\theta\in D$, we deduce that $(P_\theta)_{S_n}={\bf Ga}(n, \theta)$ for $P_\vT$-a.a. $\theta>0$. Now, by applying Proposition \ref{con}, we get  
\[
P_{S_n}(B)=\int_B\frac{\beta^\alpha\cdot x^{n-1}}{\Gamma(n) \cdot\Gamma(\alpha)}\cdot\Big(\int_{D}\theta^{\alpha+n-1}\cdot e^{-(\beta+x)\cdot \theta}\,\lambda(d\theta)\Big)\,\lambda(dx)
\]
for any $B\in\B(0,\infty)$, implying 
\[
P_ {S_n} (B)= \int_B\frac{\beta^\alpha}{\textrm{B}(\alpha,n)}\cdot\frac{x^{n-1}}{(\beta+x)^{\alpha+n}}\,\lambda(dx),
\]
i.e., $S_n$ is distributed according to the generalized Pareto distribution (cf., e.g., \cite{wil}, p. 118). Note that since each $X_n$ is $P$-conditionally exponentially distributed and $\vT$ is gamma distributed, it follows easily that each $X_n$ is Pareto distributed with parameters $\alpha$ and $\beta$  (cf., e.g., \cite{Sc}, p. 180 for the definition of the Pareto distribution). 
\end{ex}

The following definition is in accordance with the definition of mixed compound Poisson processes in \cite{lm3}, p. 780 and Lemma 3.3.

\begin{df}
\label{mcd}
\normalfont 
The aggregate claims distribution $P_S$ is  a {\bf mixed compound distribution}, written ${\bf MC}\big(P_{N},P_{X_1}\big)$ for brevity,  if $P_N$ is a mixed claim number distribution and the sequence $X$ is $P$-conditionally i.i.d. and $P$-conditionally mutually independent of $N$.
\end{df} 

In particular, if $P_\vT=\delta_{\theta_0}$ for some $\theta_0\in D$, then $P_S$ reduces to a \textit{compound distribution}, written ${\bf C}\big((P_{\theta_0})_N,(P_{\theta_0})_{X_1}\big)$ for short (cf., e.g., \cite{Sc},  p. 109).

\begin{rem}
\label{nx}
\normalfont
Assume that $\vT$ is a positive random variable and let $P_S={\bf MC}\big(P_{N},P_{X_1}\big)$. If the maps $\theta\mapsto\E_{P_\theta}[N]$ and $\theta\mapsto\E_{P_\theta}[X_1]$ have the same (resp. different) monotonicity for $P_\vT$-a.a. $\theta>0$, then the random variables $N$ and $X_1$ are $P$-positively (resp. negatively) correlated. \smallskip
	
In fact, assume first that the maps $\theta\mapsto\E_{P_\theta}[N]$ and $\theta\mapsto\E_{P_\theta}[X_1]$ have the same  monotonicity for $P_\vT$-a.a. $\theta>0$. By applying Lemma \ref{35} we get that $\E_P[N\cdot X_1]=\E_{P_\vT}\big[\E_{P_\theta}[N]\cdot\E_{P_\theta}[X_1]\big]$, implying together with \cite{Sc2014}, Theorem 2.2, and Lemma \ref{35}, that  $\E_P[N\cdot X_1]\geq\E_P[N]\cdot \E_P[X_1]$; hence $N$ and $X_1$ are positively correlated. The proof when  the maps $\theta\mapsto\E_{P_\theta}[N]$ and $\theta\mapsto\E_{P_\theta}[X_1]$ have different monotonicity, for $P_\vT$-a.a. $\theta>0$, is similar.
\end{rem}

The following characterization of mixed compound distributions  in terms of consistent rcp's allows us to eliminate conditioning via a suitable change of measures, and  to convert a mixed compound distribution  into a compound one with respect to the probability measures of the corresponding rcp. 
\begin{prop}
\label{mcd1}
The following statements are equivalent:
\begin{enumerate}
\item $P_S={\bf MC}\big(P_{N},P_{X_1}\big)$;
\smallskip
\item $(P_\theta)_S={\bf C}\big((P_{\theta})_N, (P_\theta)_{X_1}\big)$ for $P_\vT$-a.a. $\theta\in D$.
\end{enumerate} 
\end{prop}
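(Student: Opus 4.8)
The plan is to unpack both statements into their constituent conditions and translate each one separately through the rcp $\{P_\theta\}_{\theta\in D}$. By Definition \ref{mcd}, statement (i) asserts three things: that $P_N$ is a mixed claim number distribution, that $X$ is $P$-conditionally i.i.d., and that $X$ is $P$-conditionally mutually independent of $N$. The definition of a compound distribution unpacks (ii) into the $P_\vT$-a.a. $\theta$ analogues: $(P_\theta)_N$ is a claim number distribution, $X$ is $P_\theta$-i.i.d., and $X$ is $P_\theta$-mutually independent of $N$. The guiding principle is that a property of the form ``$P$-conditionally $[\,\cdot\,]$'' is equivalent, via Lemma \ref{35}, to the property ``$P_\theta$-$[\,\cdot\,]$ for $P_\vT$-a.a. $\theta\in D$''; so I would establish the three equivalences one at a time and then combine them, reading the same chain in both directions to obtain (i) $\Leftrightarrow$ (ii).

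First I would dispatch the claim-number part. Since $\{P_\theta\}_{\theta\in D}$ is an rcp of $P$ over $P_\vT$ consistent with $\vT$, Lemma \ref{35}(i) gives $\E_P[\1_{\{N=n\}}\mid\sigma(\vT)]=(P_\bullet)_N(\{n\})\circ\vT$ $P{\uph}\sigma(\vT)$-a.s.; hence $\theta\mapsto(P_\theta)_N$ is a version of the conditional distribution $P_{N\mid\vT}$ and $P_N$ is automatically a mixed claim number distribution. Thus this component holds in both statements and imposes no genuine constraint. The conditionally-i.i.d. component is exactly the content of \cite{lm6z3}, Lemma 3.4(ii) (already invoked in Proposition \ref{con} and Example \ref{par}): $X$ is $P$-conditionally i.i.d. if and only if $X$ is $P_\theta$-i.i.d. for $P_\vT$-a.a. $\theta\in D$, with $(P_\theta)_{X_1}$ the common marginal.

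The remaining, and principal, step is the mutual-independence component. Here I would fix an event $A$ in a fixed countable generating $\pi$-system of $\sigma(X)$ together with $n\in R_N$, and rewrite the $P$-conditional independence identity $P(A\cap\{N=n\}\mid\vT)=P(A\mid\vT)\cdot P(N=n\mid\vT)$ using Lemma \ref{35}(i): both sides are $\sigma(\vT)$-measurable and equal, $P{\uph}\sigma(\vT)$-a.s., to $P_\bullet(A\cap\{N=n\})\circ\vT$ and $\big(P_\bullet(A)\cdot P_\bullet(N=n)\big)\circ\vT$, respectively. Since $\vT$ pushes $P$ forward to $P_\vT$, the a.s. equality of these two functions of $\vT$ is equivalent to $P_\theta(A\cap\{N=n\})=P_\theta(A)\cdot P_\theta(N=n)$ for $P_\vT$-a.a. $\theta$. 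Intersecting the countably many resulting exceptional sets (over $A$ in the $\pi$-system and over $n\in R_N$) yields a single $P_\vT$-conull set of $\theta$ on which the atoms $\{N=n\}$ are $P_\theta$-independent of a generating class of $\sigma(X)$, whence $\sigma(N)$ and $\sigma(X)$ are $P_\theta$-independent by a standard $\pi$-$\lambda$ argument; the converse direction reverses these steps.

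The main obstacle is precisely this last equivalence: mutual independence between the whole sequence $X$ and $N$ must be transferred to $P_\theta$ \emph{uniformly} in $\theta$, so one must avoid checking an uncountable family of conditions separately for each $\theta$. Pinning down a countable generating $\pi$-system for $\sigma(X)$ (for instance, finite-dimensional cylinders with rational thresholds, together with the countably many atoms $\{N=n\}$ generating $\sigma(N)$ since $R_N\subseteq\N_0$) is what makes the ``for $P_\vT$-a.a.\ $\theta$'' quantifier legitimate after a single countable intersection. Combining the three component equivalences then completes the proof.
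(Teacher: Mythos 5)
Your proof is correct, and it rests on the same decomposition the paper uses: statement (i) is unpacked into the mixed--claim-number condition, conditional i.i.d.-ness of $X$, and conditional mutual independence of $X$ and $N$, and each component is transferred through the rcp $\{P_\theta\}_{\theta\in D}$ to its $P_\theta$-analogue for $P_\vT$-a.a.\ $\theta$. The difference lies in how the transfers are justified. The paper's proof is purely citation-based: Lemmas 3.2, 3.3 and 3.4(ii) of \cite{lm6z3} supply, respectively, the kernel identity $(P_\theta)_N={\bf K}(\theta)$ for $P_\vT$-a.a.\ $\theta$, the transfer of mutual independence, and the transfer of the i.i.d.\ property. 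You invoke only Lemma 3.4(ii) and argue the other two components directly: you observe that, under the paper's standing assumption that the consistent rcp exists, Lemma \ref{35}(i) exhibits $\theta\mapsto(P_\theta)_N$ as a version of $P_{N\mid\vT}$, so the mixed--claim-number condition is automatic on both sides (this observation in fact subsumes the content of \cite{lm6z3}, Lemma 3.2, since any kernel ${\bf K}$ with $P_{N\mid\vT}={\bf K}(\vT)$ a.s.\ must then agree with $(P_\bullet)_N$ $P_\vT$-a.e.); and you prove the independence transfer by hand, via a countable generating $\pi$-system for $\sigma(X)$, the pushforward identity that converts $P$-a.s.\ equality of functions of $\vT$ into $P_\vT$-a.e.\ equality, a single countable intersection of exceptional sets, and a $\pi$-$\lambda$ argument. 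Both routes are sound. The paper's proof is shorter; yours is self-contained and makes explicit the measure-theoretic mechanism hidden inside the cited Lemma 3.3 --- namely that countability of the generating class is precisely what legitimizes the ``for $P_\vT$-a.a.\ $\theta$'' quantifier, the point at which a naive event-by-event argument would founder on an uncountable union of null sets.
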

\begin{proof}
Statement (i) is equivalent to the facts $P_{N\mid\vT}={\bf K}(\vT)$ $P{\uph}\sigma(\vT)$-a.s., $X$ is $P$-conditionally i.i.d. and $P$-conditionally mutually independent of $N$. Due to \cite{lm6z3}, Lemmas 3.2, 3.3 and 3.4(ii), we equivalently get that $(P_\theta)_{N}={\bf K}(\theta)$,  that  $N$ and $X$ are $P_\theta$-mutually independent and  that  $X$ is $P_\theta$-i.i.d., respectively, for $P_\vT$-a.a. $\theta\in D$, which is equivalent to (ii). 
\end{proof}

The characterization appearing in Proposition \ref{mcd1} allows  the extension of well-known results for compound distributions to their mixed counterpart. In the next result, we present a formula for the  probability distribution of $S$ in the case $P_S={\bf MC}\big(P_{N},P_{X_1}\big)$.

 \begin{cor}
 	\label{joint}
If $P_S={\bf MC}\big(P_{N},P_{X_1}\big)$, then
\[
P_S(B)=\sum_{n=0}^\infty \Big(\int_Dp_n(\theta)\cdot \int_Bf_\theta^{\ast n}(x)\,\mu(dx)\,P_\vT(d\theta)\Big)\quad\text{for any } B\in\B(R_S)
\]
  \end{cor}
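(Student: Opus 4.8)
The plan is to disintegrate $P_S$ over $P_\vT$ and then exploit the characterization of Proposition \ref{mcd1} to reduce the computation to the well-known expansion of an (ordinary) compound distribution. First I would fix $B\in\B(R_S)$ and apply Lemma \ref{35}(ii) with $f=\vT$, $Q=P_\vT$, $A=S^{-1}[B]$ and the outer integrating set equal to the whole space $D$; since $\vT^{-1}[D]=\vO$, this yields
\[
P_S(B)=P\big(S^{-1}[B]\big)=\int_D (P_\theta)_S(B)\,P_\vT(d\theta),
\]
so that everything is reduced to an identity for the individual measures $(P_\theta)_S$ holding for $P_\vT$-a.a. $\theta\in D$.

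Next, by Proposition \ref{mcd1} the hypothesis $P_S={\bf MC}\big(P_N,P_{X_1}\big)$ is equivalent to $(P_\theta)_S={\bf C}\big((P_\theta)_N,(P_\theta)_{X_1}\big)$ for $P_\vT$-a.a. $\theta\in D$. For a genuine compound distribution the standard expansion over the number of claims gives
\[
(P_\theta)_S(B)=\sum_{n=0}^\infty p_n(\theta)\cdot (P_\theta)_{S_n}(B),
\]
where $(P_\theta)_{S_0}=\delta_0$ and, for $n\geq 1$, $(P_\theta)_{S_n}$ is the distribution of the sum of $n$ claims under $P_\theta$. Substituting identity \eqref{con1} obtained in the proof of Proposition \ref{con}, namely $(P_\theta)_{S_n}(B)=\int_B f_\theta^{\ast n}(x)\,\mu(dx)$ (with the usual convention that the $n=0$ term reproduces $\delta_0$), I would obtain
\[
(P_\theta)_S(B)=\sum_{n=0}^\infty p_n(\theta)\cdot \int_B f_\theta^{\ast n}(x)\,\mu(dx)\quad\text{for }P_\vT\text{-a.a. }\theta\in D.
\]

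Finally I would integrate this identity against $P_\vT$ and interchange the summation with the integral $\int_D\,\cdot\,P_\vT(d\theta)$, giving
\[
P_S(B)=\int_D\Big(\sum_{n=0}^\infty p_n(\theta)\int_B f_\theta^{\ast n}(x)\,\mu(dx)\Big)P_\vT(d\theta)=\sum_{n=0}^\infty\Big(\int_D p_n(\theta)\int_B f_\theta^{\ast n}(x)\,\mu(dx)\,P_\vT(d\theta)\Big),
\]
which is the asserted formula. The main (and essentially only) obstacle is the rigorous justification of this term-by-term integration; it is handled by Tonelli's theorem, since every summand $\theta\mapsto p_n(\theta)\int_B f_\theta^{\ast n}\,d\mu$ is nonnegative, so the series and the integral may be swapped freely. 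The measurability in $\theta$ required to invoke Tonelli follows from property (d1) of the rcp $\{P_\theta\}_{\theta\in D}$ together with the measurability of the convolution densities $f_\theta^{\ast n}$.
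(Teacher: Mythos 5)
Your proof is correct and follows essentially the same route as the paper's: reduce via Proposition \ref{mcd1} to $(P_\theta)_S={\bf C}\big((P_\theta)_N,(P_\theta)_{X_1}\big)$ for $P_\vT$-a.a. $\theta\in D$, expand each $(P_\theta)_S(B)$ as a compound distribution, and integrate out $\theta$ using Lemma \ref{35}. The only differences are cosmetic: the paper cites Schmidt's Lemma 5.1.1 for the per-$\theta$ expansion where you derive it from the standard decomposition over $\{N=n\}$ together with identity \eqref{con1}, and you make explicit (via Tonelli and property (d1)) the sum--integral interchange that the paper leaves implicit.
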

\begin{proof}
Since  $P_S={\bf MC}\big(P_{N},P_{X_1}\big)$, apply Proposition \ref{mcd1} to get $(P_\theta)_S={\bf C}\big((P_{\theta})_N, (P_\theta)_{X_1}\big)$ for $P_\vT$-a.a. $\theta\in D$; hence by \cite{Sc}, Lemma 5.1.1, we obtain
\[
(P_\theta)_S(B)=\sum_{n=0}^\infty  p_n(\theta)\cdot \int_Bf_\theta^{\ast n}(x)\,\mu(dx)\quad\text{for any } B\in\B(R_S). 
\]
The latter, together with Lemma \ref{35}, completes the proof.
\end{proof}

Denote by $g$ the probability (density) function of the random variable $S$ with respect to $P$. 
The next example presents an explicit formula for the tail probability $P(S>u)$ in the case of a mixed compound geometric distribution with  conditionally exponentially distributed  claim sizes.

\begin{ex}
\label{comp}
\normalfont
Let  $P_S={\bf MC}\big(P_{N},P_{X_1}\big)$ with $P_N=\textbf{MNB}\big(1,\rho_2(\vT)\big)$  and    $P_{X_1\mid\vT}=\textbf{Exp}\big(v(\vT)\big)$ $P{\uph}\sigma(\vT)$-a.s.,  where $v$ is a  $\B(D)$-$\B(0,\infty)$-measurable function. By Proposition \ref{mcd1}, we get that $(P_\theta)_S={\bf C}\big((P_{\theta})_N, (P_\theta)_{X_1}\big)$  with $(P_{\theta})_N=\textbf{NB}\big(1, \rho_2(\theta)\big)$ and $(P_\theta)_{X_1}=\textbf{Exp}\big(v(\theta)\big)$ for $P_\vT$-a.a. $\theta\in D$. An easy computation yields 
\[
g(x)=\begin{cases}
\E_P\big[\rho_2(\vT)\big]	 & \text{, if } x=0\\
\E_P\Big[\big(1-\rho_2(\vT)\big)\cdot \rho_2(\vT)\cdot  v(\vT)\cdot e^{-\rho_2(\vT)\cdot v(\vT)\cdot x}\Big]  & \text{, if } x>0
\end{cases};
\]
hence 
\begin{gather}
P(S>u)=\E_P\Big[\big(1-\rho_2(\vT)\big)\cdot e^{-\rho_2(\vT)\cdot v(\vT)\cdot u}\Big]\quad\text{for any } u>0.
\label{381}
\end{gather}
	
\noindent In particular,\smallskip 
	
\noindent{\textbf{(a)}} if  $D=(0,\infty)$, $\rho_2(\vT) = \frac{1}{1+\vT}$ and $v(\vT)=\vT^2$, then condition \eqref{381} can be rewritten as
\[
P(S>u)=\E_P\Big[ \frac{\vT}{1+\vT} \cdot e^{-\frac{\vT^2}{1+\vT} \cdot u} \Big] \quad \text{for any } u > 0.
\]
Figure \ref{fig38a} displays the tail probability $P(S>u)$ in the case that the random variable $\vT$ is distributed according to the Inverse Gaussian distribution with parameters $\mu, \varphi>0$ (written \textbf{IG}$(\mu,\varphi)$ for short), i.e., 
\[
P_\vT(B)=\int_B \sqrt\frac{\varphi}{2 \pi x^3}\cdot e^{-\frac{\varphi (x-\mu)^2}{2 \mu^2 x}}\,\lambda(dx)\quad\text{for any } B\in\B(0,\infty)
\] 
\begin{figure}[H]
\centering
\begin{subfigure}[t]{0.35\textwidth}
\subcaption*{\textbf{IG}$(1,\varphi)$}
\includegraphics[width=1\linewidth]{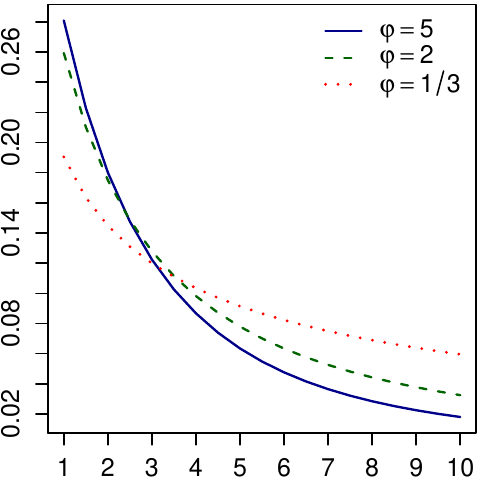}	
\label{fig38a1}
\end{subfigure}\hspace{8mm} 
~ 
\begin{subfigure}[t]{0.35\textwidth}
\subcaption*{\textbf{IG}$(\mu,1)$}
\includegraphics[width=1\linewidth]{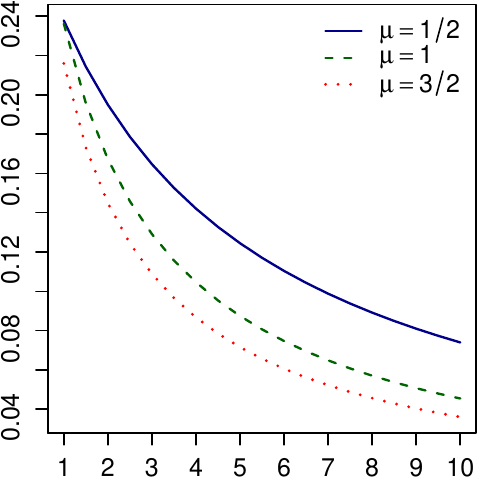}
\label{fig38a2}
\end{subfigure}
\caption{The tail probability $P(S>u)$ for different values of $\mu$ and $\varphi$.}
\label{fig38a}
\end{figure}
\noindent Note that this particular selection for $\rho_2(\vT)$ and $P_\vT$ has also been adopted in a regression context by \cite{tzougas}; \smallskip 
	
\noindent{\textbf{(b)}} Analogously, if $\rho_2(\vT)=e^{-\vT}$, $v(\vT)=\vT^3$, then condition \eqref{381} becomes
\[
P(S>u)=\E_P\Big[\big(1-e^{-\vT}\big)\cdot e^{-e^{-\vT}\cdot \vT^3 \cdot u}\Big] \quad \text{for any } u > 0.
\]
Figure \ref{fig38b} depicts the tail probability $P(S>u)$ in the case $P_\vT=\textbf{Ga}(\al,\be)$, where $\al,\be>0$. 
\begin{figure}[H]
\centering
\begin{subfigure}[t]{0.35\textwidth}
\subcaption*{\textbf{Ga}$(3,\beta)$}
\includegraphics[width=1\linewidth]{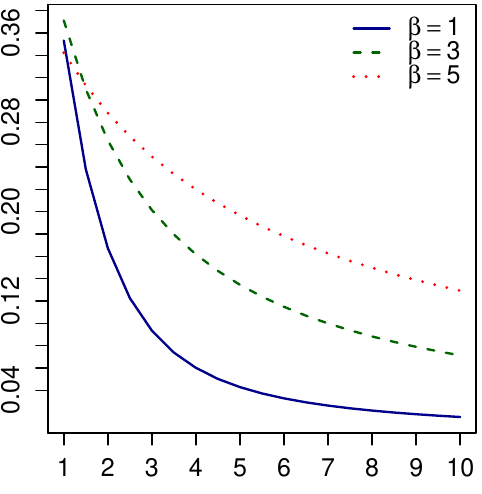}
\label{fig38b1}
\end{subfigure}\hspace{8mm} 
~ 
\begin{subfigure}[t]{0.35\textwidth}
\subcaption*{\textbf{Ga}$(\alpha,3)$}
\includegraphics[width=1\linewidth]{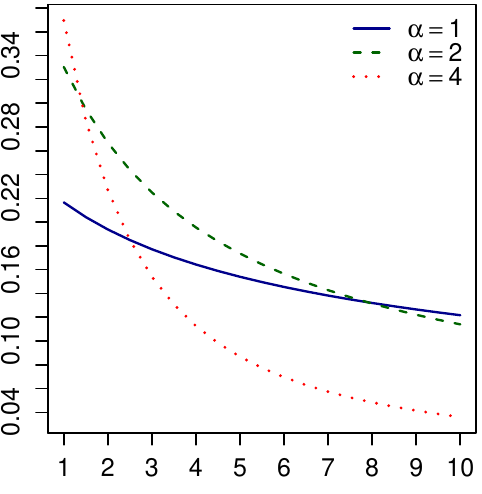}
\label{fig38b2}
\end{subfigure}
\caption{The tail probability $P(S>u)$ for different values of $\al$ and $\be$.}
\label{fig38b}
\end{figure}
\noindent Note that a similar mixture for $N$ has been studied by \cite{gy}; \smallskip
	
\noindent{\textbf{(c)}} Now, let $D=(0,1)$,  $\rho_2(\vT)=\vT$ and $v(\vT)=-\ln\vT^2$. In this case, condition \eqref{381} can be rewritten as
\[
P(S>u)=\E_P\Big[\big(1-\vT\big)\cdot \vT^{2\cdot u\cdot \vT}\Big] \quad \text{for any } u > 0.
\]
Figure \ref{fig38c} illustrates the tail probability $P(S>u)$ in the case $P_\vT=\textbf{Be}(\al,\be)$, where $\al,\be>0$. Note that such a mixture for $N$ has been considered by \cite{zwang}. 
\begin{figure}[H]
\centering
\begin{subfigure}[t]{0.35\textwidth}
\subcaption*{\textbf{Be}$(3,\beta)$}
\includegraphics[width=1\linewidth]{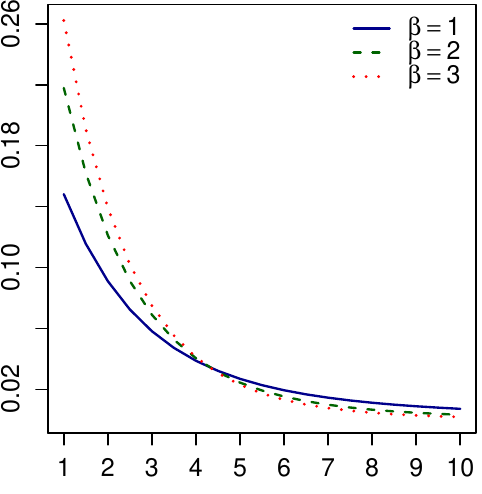}
\label{fig38c1}
\end{subfigure}\hspace{8mm} 
~ 
\begin{subfigure}[t]{0.35\textwidth}
\subcaption*{\textbf{Be}$(\alpha,3)$}
\includegraphics[width=1\linewidth]{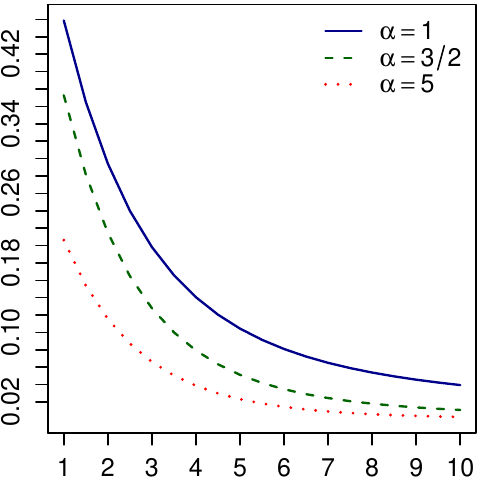}
\label{fig38c2}
\end{subfigure}
\caption{The tail probability $P(S>u)$ for different values of $\al$ and $\be$.}
\label{fig38c}
\end{figure}
\end{ex}

\section{Recursive formulas for mixed compound distributions}\label{rcd}

\textit{Throughout what follows in this section, unless stated otherwise, $P_S={\bf MC}\big(P_{N},P_{X_1}\big)$ with $P_N\in  \text{\bf Panjer}(a(\vT),b(\vT);0)$ and $P_{X_1}(\N_0)=1$.}\smallskip

Since in the collective risk model the random variable $S$ represents the total claim amount paid out from an insurance company over a fixed period of time, the determination of its pmf is of great importance. However, even in the classical case where $X$ is $P$-i.i.d. and $N, X$ are $P$-mutually independent, it is difficult to obtain a closed form for the pmf of $S$, which usually involves  the computation of higher-order convolutions, a task that may be proven difficult, or at least time consuming. As a consequence, actuaries usually resort to recursive expressions, which seem to be more convenient and computationally efficient.\smallskip

In the next result,  we obtain a recursive formula for the pmf of $S$ by extending the famous Panjer recursion (cf., e.g., \cite{Sc}, Theorem 5.4.2) to the class $\text{\bf Panjer}(a(\vT),b(\vT);0)$. In order to present it, we denote by $m_{N\mid\vT}$ the conditional probability generating functions of $N$, i.e., $m_{N\mid\vT}(t)=\E_P\big[t^N\mid\vT\big]$ for any $t$ with $|t|\leq1$.

\begin{thm}
	\label{pan2}
	There exists a family  $\{\rho_x\}_{x\in{R}_S}$ of probability measures on $\vS$ defined by  
	\[
	\rho_x(A):=\frac{\E_P[\1_A\cdot\1_{\{S=x\}}]}{\E_P[\1_{\{S=x\}}]}\quad\text{for any } A\in\vS
	\] 
	being an rcp of $P$ over $P_S$ consistent with $S$, such that 
	\[
	g(x)=\begin{cases}
		\E_P\big[m_{N\mid\vT} \big(f_{X_1\mid\vT}(0)\big)\big]	 & \text{, if } x=0\\
		\sum_{y=1}^xD_{x,y}\cdot g(x-y)& \text{, if } x\in R_S\setminus\{0\}
	\end{cases},
	\]
	where 
	\[
	D_{x,y}:=\E_{\rho_{x-y}}\bigg[\Big( a(\vT)+\frac{b(\vT)\cdot y}{x}\Big)\cdot \frac{f_{X_1\mid\vT}(y)}{1-a(\vT)\cdot f_{X_1\mid\vT}(0)}\bigg]
	\]
	for any $x\in R_S\setminus\{0\}$ and $y\in\{1,\ldots,x\}$.\medskip
	
	In particular, if $f_{X_1\mid\vT}(0)=0$ $P{\uph}\sigma(\vT)$-a.s. then
	\[
	g(x)=\begin{cases}
		p_0	 & \text{, if } x=0\\
		\sum_{y=1}^xD_{x,y}\cdot g(x-y)& \text{, if } x\in R_S\setminus\{0\}
	\end{cases}, 
	\]
	where 
	\[
	D_{x,y}=\E_{\rho_{x-y}}\Big[\Big( a(\vT)+\frac{b(\vT)\cdot y}{x}\Big)\cdot \ f_{X_1\mid\vT}(y)\Big]
	\]
	for any $x\in R_S\setminus\{0\}$ and $y\in\{1,\ldots,x\}$. 
\end{thm}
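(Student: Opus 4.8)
The plan is to reduce the mixed recursion to the ordinary Panjer recursion applied fiberwise under each $P_\theta$, and then to integrate against $P_\vT$, re-expressing every resulting integral through the rcp $\{\rho_x\}_{x\in R_S}$. Since $P_{X_1}(\N_0)=1$ and $N$ is $\N_0$-valued, the aggregate $S$ is $\N_0$-valued, so $R_S$ is countable. I would first verify that $\{\rho_x\}_{x\in R_S}$ is an rcp of $P$ over $P_S$ consistent with $S$, exactly as $\{\mu_n\}$ was treated in the proof of Theorem \ref{rec}: condition (d1) is automatic because the index set is discrete; condition (d2) follows from the tower property $P(A)=\E_P[\E_P[\1_A\mid S]]=\sum_x \rho_x(A)\,P(S=x)$; and consistency is immediate from the definition, since for $x\in B$ one has $\1_{\{S\in B\}}\1_{\{S=x\}}=\1_{\{S=x\}}$, whence $\rho_x(S^{-1}[B])=1$.

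Next, by Proposition \ref{mcd1} together with Proposition \ref{mpanchar}, for $P_\vT$-a.a. $\theta\in D$ the conditional law $(P_\theta)_S$ is an ordinary compound distribution ${\bf C}\big((P_\theta)_N,(P_\theta)_{X_1}\big)$ whose counting part $(P_\theta)_N$ lies in ${\bf Panjer}(a(\theta),b(\theta);0)$ and whose claim sizes are concentrated on $\N_0$. The classical Panjer recursion (\cite{Sc}, Theorem 5.4.2) therefore applies fiberwise: writing $g_\theta(x):=(P_\theta)_S(\{x\})$, for $P_\vT$-a.a. $\theta$ we have $g_\theta(0)=\E_{P_\theta}\big[\big(f_\theta(0)\big)^N\big]$ and, for $x\geq 1$,
\[
g_\theta(x)=\frac{1}{1-a(\theta)f_\theta(0)}\sum_{y=1}^x\Big(a(\theta)+\frac{b(\theta)\,y}{x}\Big)f_\theta(y)\,g_\theta(x-y).
\]

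Finally I would integrate over $\theta$. By Lemma \ref{35} one has $g(x)=\E_P[\1_{\{S=x\}}]=\int_D g_\theta(x)\,P_\vT(d\theta)$, and more generally, for any $\B(D)$-measurable $\psi$,
\[
\int_D \psi(\theta)\,g_\theta(x-y)\,P_\vT(d\theta)=\E_P\big[\psi(\vT)\,\1_{\{S=x-y\}}\big]=\E_{\rho_{x-y}}[\psi(\vT)]\cdot g(x-y),
\]
the last equality being exactly the defining relation of $\rho_{x-y}$. Applying this with $\psi(\theta)=\big(a(\theta)+b(\theta)y/x\big)f_\theta(y)/\big(1-a(\theta)f_\theta(0)\big)$ to the integrated fiberwise recursion yields $g(x)=\sum_{y=1}^x D_{x,y}\,g(x-y)$ with $D_{x,y}$ as stated; the case $x=0$ follows from $g(0)=\int_D \E_{P_\theta}\big[\big(f_\theta(0)\big)^N\big]\,P_\vT(d\theta)=\E_P\big[m_{N\mid\vT}\big(f_{X_1\mid\vT}(0)\big)\big]$, again by Lemma \ref{35}(i) (reading $m_{N\mid\vT}(f_{X_1\mid\vT}(0))$ as the random variable $\theta\mapsto \E_{P_\theta}[(f_\theta(0))^N]$ evaluated at $\vT$). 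The degenerate subcase is the obvious specialization to $f_\theta(0)=0$ for $P_\vT$-a.a. $\theta$.

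The step I expect to require the most care is the final passage: one must discard uniformly the $P_\vT$-null exceptional sets produced by Propositions \ref{mcd1} and \ref{mpanchar} so that the fiberwise recursion holds simultaneously for $P_\vT$-a.a. $\theta$ before integrating, and one must check the (mild) integrability ensuring $\E_{\rho_{x-y}}[|\psi(\vT)|]<\infty$ so that the finite sum over $y\in\{1,\dots,x\}$ may be exchanged with $\int_D\cdot\,P_\vT(d\theta)$. The conceptual crux is the identity $\int_D\psi(\theta)g_\theta(x-y)\,P_\vT(d\theta)=\E_{\rho_{x-y}}[\psi(\vT)]\,g(x-y)$, which is precisely what the measures $\rho_x$ were constructed to encode.
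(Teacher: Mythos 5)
Your proposal is correct and follows essentially the same route as the paper's proof: verify that $\{\rho_x\}_{x\in R_S}$ is an rcp exactly as $\{\mu_n\}$ was handled in Theorem \ref{rec}, reduce to the classical Panjer recursion fiberwise under each $P_\theta$ (the paper cites \cite{lm6z3}, Lemmas 3.3 and 3.4(ii), together with Proposition \ref{mpanchar}, where you invoke the equivalent packaging via Proposition \ref{mcd1}), and then integrate against $P_\vT$ using Lemma \ref{35} and the defining property of $\rho_{x-y}$, which is precisely the paper's key identity $\E_{\rho_{x-y}}[\1_{\vT^{-1}[F]}]=\E_{P_\vT}[\1_F\cdot g_\theta(x-y)]/g(x-y)$. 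The only cosmetic difference is that the paper obtains the initial value $g(0)$ from Corollary \ref{joint} with $B=\{0\}$ rather than from the fiberwise pgf formula, which amounts to the same computation.
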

\begin{proof}
	Fix on arbitrary $x\in R_S$ and define the set-function $\rho_x:\vS\to(0,\infty)$ by 
	\[
	\rho_x(A):=\frac{\E_P[\1_A\cdot\1_{\{S=x\}}]}{\E_P[\1_{\{S=x\}}]}\quad\text{for any } A\in\vS.
	\] 		
By applying similar arguments to those appearing in the proof of Theorem \ref{rec}, it follows easily that the family $\{\rho_y\}_{y\in{R}_S}$ is an rcp of $P$ over $P_S$ consistent with $S$.  Furthermore, first note that condition $g(0)=\E_P\big[m_{N\mid\vT}\big(f_{X_1\mid\vT}(0)\big)\big]$  is an immediate consequence of Corollary \ref{joint} for $B=\{0\}$.  By \cite{lm6z3}, Lemmas 3.4(ii) and 3.3,  the sequence  $X$ is $P_\theta$-i.i.d. and $(N,X)$ are $P_\theta$-mutually independent for $P_\vT$-a.a. $\theta\in D$, respectively. Since $P_N\in {\bf Panjer}(a(\vT),b(\vT);0)$,  Proposition \ref{mpanchar}  implies that $(P_\theta)_N\in {\bf Panjer}(a(\theta),b(\theta);0)$ for $P_\vT$-a.a. $\theta\in D$; hence we may apply  \cite{Sc}, Theorem 5.4.2, to get 
	\[
	g_\theta(x)= \frac{1}{1-a(\theta)\cdot f_\theta(0)}\cdot\sum_{y=1}^x\Big( a(\theta)+\frac{b(\theta)\cdot y}{x}\Big)\cdot f_\theta(y)\cdot g_\theta(x-y),
	\]
where $g_\theta$ denotes the pmf of $S$ with respect to $P_\theta$. The latter, together with Corollary \ref{joint} and Lemma \ref{35}, yields 
	\begin{equation}
		g(x)=\sum_{y=1}^x\E_P\Big[\Big( a(\vT)+\frac{b(\vT)\cdot y}{x}\Big)\cdot \frac{f_{X_1\mid\vT}(y)}{1-a(\vT)\cdot f_{X_1\mid\vT}(0)}\cdot g_\vT(x-y)\Big].
		\label{391}
	\end{equation}
But since  
	\begin{align*}
		\E_{\rho_x}[\1_{\vT^{-1}[F]}]=\frac{\E_P[\1_{\vT^{-1}[F]}\cdot\1_{\{S=x\}}]}{\E_P[\1_{\{S=x\}}]}=\frac{\E_{P_\vT}[\1_F\cdot g_{\theta}(x)]}{g(x)}\quad\text{for each } F\in\B(D),
	\end{align*} 
	where the second equality follows by Lemma \ref{35}, we easily get that  
	\begin{gather}
		\begin{split}
			&\E_P\Big[\Big( a(\vT)+\frac{b(\vT)\cdot y}{x}\Big)\cdot \frac{f_{X_1\mid\vT}(y)}{1-a(\vT)\cdot f_{X_1\mid\vT}(0)}\cdot g_\vT(x-y)\Big]\\ &\quad=\E_{\rho_{x-y}}\Big[\Big( a(\vT)+\frac{b(\vT)\cdot y}{x}\Big)\cdot \frac{f_{X_1\mid\vT}(y)}{1-a(\vT)\cdot f_{X_1\mid\vT}(0)}\Big]\cdot g(x-y)
			\label{392}
		\end{split}
	\end{gather}
	for any $y\in\{1,\ldots,x\}$. Conditions \eqref{391} and \eqref{392} complete  the first part of the proof. \smallskip
	
	In particular, first note that if $f_{X_1\mid\vT}(0)=0$ $P{\uph}\sigma(\vT)$-a.s., then $m_{N\mid\vT}\big(f_{X_1\mid\vT}(0)\big)=p_0(\vT)$ $P{\uph}\sigma(\vT)$-a.s.. The latter, along with conditions \eqref{391} and \eqref{392}, completes the proof. 
\end{proof}

\begin{rem}\normalfont
	Assume that $P_{X_1}$ is absolutely continuous with respect to the Lebesgue measure $\lambda$ on $\B(0,\infty)$. By applying the characterization appearing on Proposition \ref{mcd1} in conjunction with \cite{pa}, Theorem on p. 24, we can easily prove the equality
	\begin{gather}
			g(x) =\E_{P}\big[p_1(\vT)\cdot f_{X_1\mid\vT}(x)\big] + \int_0^x\E_{P}\bigg[\Big(\alpha(\vT)+\frac{b(\vT)\cdot y}{x}\Big)\cdot f_{X_1\mid\vT}(y)\cdot g_\vT(x-y)\bigg]\,\lambda(dy)
		\label{pancon}
	\end{gather}
	for any  $x>0$. The integral equation \eqref{pancon} has to be solved numerically, however its implementation presents some serious difficulties (see \cite{gdsco}, p. 42).  The proof of a recursion for $g$ in the spirit of Theorem \ref{pan2} and its implementation is the subject of a forthcoming paper. 
\end{rem}

\begin{rem}
	\label{com}
	\normalfont
If $X$ and  $\vT$ are $P$-independent, it  follows by Lemma \ref{35} that $P_{X_1\mid\vT}=P_{X_1}$ $P{\uph}\sigma(\vT)$-a.s.. Hence   Theorem  \ref{pan2} gives
	\[
	g(x)=\begin{cases}
		\E_P\Big[m_{N\mid\vT}\big(f_{X_1}(0)\big)\Big]	 & \text{, if } x=0\\
		\sum_{y=1}^x{D}_{x,y}\cdot g(x-y)& \text{, if } x\in R_S\setminus\{0\}
	\end{cases},
	\]
	where ${D}_{x,y}=\E_{\rho_{x-y}}\Big[\frac{x\cdot a(\vT)+y\cdot b(\vT)}{x\cdot(1-a(\vT)\cdot f_{X_1}(0))}\Big]\cdot f_{X_1}(y)$ for any $x\in R_S\setminus\{0\}$ and $y\in\{1,\ldots,x\}$.
\end{rem}

Recall that a sequence $Z:=\{Z_n\}_{n\in\N}$ of random variables on $\vO$ is called \textit{exchangeable}, if the joint distribution of the sequence is invariant under the permutation of the indices (cf., e.g., \cite{fr4}, 459C). In non-classical Risk Theory, the concept of exchangeability seems to be an appealing way to introduce a dependence structure between the claim sizes. Actually, exchangeability can be a natural assumption in the risk model context (see \cite{acl}, Remark 2.7). Note that  by \cite{lm6z3}, Proposition 4.1, we get that a sequence $Z$ is  exchangeable under $P$ if and only if there exists a  random vector $\widetilde\vT$ such that $Z$ is $P$-conditionally i.i.d. over $\sigma(\widetilde\vT)$. 

\begin{cor}
	\label{exch}
	Let $\{\rho_x\}_{x\in{R}_S}$ be the family  of probability measures constructed in Theorem \ref{pan2}. If $P_N\in{\bf Panjer}(a,b;0)$ and $N$, $\vT$ are $P$-independent then
	\[
	g(x)=\begin{cases}
		\E_P\Big[m_N \big(f_{X_1\mid\vT}(0)\big)\Big]	 & \text{, if } x=0\\
		\sum_{y=1}^x D_{x,y}\cdot g(x-y) & \text{, if } x\in R_S\setminus\{0\}
	\end{cases},
	\]
	where $D_{x,y}=\big( a +b \cdot \frac{y}{x}\big)\cdot\E_{\rho_{x-y}}\Big[\frac{f_{X_1\mid\vT}(y)}{1-a \cdot f_{X_1\mid\vT}(0)}\Big]$ for any $x\in R_S\setminus\{0\}$ and $y\in\{1,\ldots,x\}$.
\end{cor}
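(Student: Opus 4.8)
The plan is to derive Corollary \ref{exch} as a direct specialization of Theorem \ref{pan2} to the case where the mixed Panjer class degenerates into the ordinary (non-mixed) Panjer class for the counting variable $N$, while the dependence structure survives only through the claim sizes $X$. Concretely, I would start from the hypothesis $P_N\in{\bf Panjer}(a,b;0)$ with \emph{constant} functions $a$ and $b$; this is exactly the situation $P_N\in{\bf Panjer}(a(\vT),b(\vT);0)$ with $a(\vT)=a$ and $b(\vT)=b$ $P{\uph}\sigma(\vT)$-a.s., so that Theorem \ref{pan2} applies verbatim with the same rcp family $\{\rho_x\}_{x\in R_S}$ over $P_S$ consistent with $S$. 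The whole content of the corollary is therefore to simplify the two pieces of the conclusion of Theorem \ref{pan2} under the two standing assumptions: (a) $a,b$ are constants, and (b) $N$ and $\vT$ are $P$-independent.

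For the initial value $x=0$, Theorem \ref{pan2} gives $g(0)=\E_P\big[m_{N\mid\vT}\big(f_{X_1\mid\vT}(0)\big)\big]$. Here I would invoke the $P$-independence of $N$ and $\vT$: by Lemma \ref{35} (or the elementary fact that conditioning on $\vT$ leaves the law of $N$ unchanged when $N\perp\vT$), the conditional probability generating function reduces to the unconditional one, $m_{N\mid\vT}=m_N$ $P{\uph}\sigma(\vT)$-a.s. Substituting this into the expectation yields $g(0)=\E_P\big[m_N\big(f_{X_1\mid\vT}(0)\big)\big]$, which is precisely the stated initial value. Note the argument $f_{X_1\mid\vT}(0)$ still depends on $\vT$, so the outer expectation does not collapse; only the generating function loses its conditioning.

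For the recursive step, I would simply specialize the coefficient
\[
D_{x,y}=\E_{\rho_{x-y}}\bigg[\Big(a(\vT)+\frac{b(\vT)\cdot y}{x}\Big)\cdot\frac{f_{X_1\mid\vT}(y)}{1-a(\vT)\cdot f_{X_1\mid\vT}(0)}\bigg]
\]
to constant $a(\vT)=a$, $b(\vT)=b$. Since the factor $a+\frac{b\cdot y}{x}$ is now deterministic (it depends only on the indices $x,y$, not on $\vT$), it pulls out of the $\rho_{x-y}$-expectation, leaving
\[
D_{x,y}=\Big(a+b\cdot\frac{y}{x}\Big)\cdot\E_{\rho_{x-y}}\bigg[\frac{f_{X_1\mid\vT}(y)}{1-a\cdot f_{X_1\mid\vT}(0)}\bigg],
\]
which matches the claimed formula. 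I would remark that the $\vT$-dependence genuinely remains inside the expectation through $f_{X_1\mid\vT}(y)$ and $f_{X_1\mid\vT}(0)$, reflecting the fact that the claim sizes are only conditionally i.i.d.

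I do not anticipate a serious obstacle here, since the corollary is essentially a bookkeeping reduction of an already-proved theorem. The only point requiring slight care is justifying the identity $m_{N\mid\vT}=m_N$ a.s. from $P$-independence of $N$ and $\vT$; this follows because independence of $N$ and $\vT$ forces $\E_P[t^N\mid\vT]=\E_P[t^N]$ $P{\uph}\sigma(\vT)$-a.s. for each admissible $t$, and Lemma \ref{35} lets one transport this equality through the rcp. Everything else is the observation that constants factor out of expectations. Thus the proof is a two-line verification: apply Theorem \ref{pan2}, then use $N\perp\vT$ to replace $m_{N\mid\vT}$ by $m_N$ and pull the deterministic Panjer factor out of the $\rho_{x-y}$-expectation.
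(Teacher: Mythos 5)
Your proposal is correct and follows essentially the same route as the paper, which simply states that the corollary follows immediately from Theorem \ref{pan2} and omits the details. One point of precision, though: your opening identification --- that $P_N\in{\bf Panjer}(a,b;0)$ ``is exactly'' membership in ${\bf Panjer}(a(\vT),b(\vT);0)$ with constant functions --- is not automatic. The ordinary class is defined by the recursion for the \emph{marginal} pmf $p_n$, whereas the mixed class (Definition \ref{mp}) requires the recursion for the \emph{conditional} pmf $p_n(\vT)$ to hold $P{\uph}\sigma(\vT)$-a.s.; these are not equivalent in general (e.g.\ a mixed Poisson $N$ with gamma-distributed $\vT$ has a negative binomial marginal, so it lies in the ordinary class with constant $a,b$, yet its conditional pmf obeys the recursion with $a(\vT)=0$, $b(\vT)=\vT$, not with those constants). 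The identification does hold here precisely because $N$ and $\vT$ are $P$-independent, which gives $p_n(\vT)=p_n$ $P{\uph}\sigma(\vT)$-a.s.\ and hence transfers the marginal recursion to the conditional one. So independence is used twice --- once to make Theorem \ref{pan2} applicable at all, and once to replace $m_{N\mid\vT}$ by $m_N$ --- not only in the latter step, as your closing remark suggests; with that adjustment your two-line verification is exactly the intended proof.
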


\noindent The proof follows immediately by Theorem \ref{pan2} and therefore it is omitted.\smallskip 

The following example shows that if $P_N\in {\bf Panjer}(a(\vT),b(\vT);0)$, then the claim number distribution, for example in the case of an Excess of Loss reinsurance, remains in the same class, but with different parameters (cf., e.g., \cite{Sc}, p. 108).

\begin{ex}
\label{thin}
\normalfont
Let  $h$ be a $\B(D)$-$\B(0,\infty)$-measurable function and set $v(\theta):=P_\theta(X_1>h(\theta))\in(0,1)$ for any $\theta\in D$. Fix  an arbitrary $n\in\N$ and define the function $z_n:\vO\times D\rightarrow\{0,1\}$ by  $z_n(\omega,\theta):=\1_{\{X_n>h(\theta)\}}(\omega)$ for any $(\omega,\theta)\in\vO\times D$. For an arbitrary, but fixed $\theta\in D$, set $z(\theta):=\{z_k(\theta)\}_{k\in\N}$. Define the real-valued function $Z_n(\vT)$ on $\vO$ by  $Z_n(\vT):=z_n\circ (id_\vO\times \vT)$ and the sequence $Z(\vT):=\{Z_k(\vT)\}_{k\in\N}$. Clearly $Z(\vT)$ is $P$-conditionally i.i.d. and $P$-conditionally independent of $N$ with  $P_{Z_n(\vT)}={\bf MB}(1,v(\vT))$. Consider the random sum $M:=\sum_{k=1}^N Z_k(\vT)$. It then follows by Theorem \ref{pan2} that 
	\[
	g(x)=\begin{cases}
		\E_P\big[m_{N\mid\vT}\big(1-v(\vT)\big)\big] & \text{, if } x=0\\
		D_{x,1}\cdot g(x-1) & \text{, if } x\in R_M\setminus\{0\}
	\end{cases},
	\]
	where 
 \[
 D_{x,1}=\E_{\rho_{x-1}}\bigg[\Big(\frac{a(\vT)\cdot v(\theta)}{1-a(\vT)\cdot\big(1-v(\vT)\big) }+\frac{1}{x} \cdot\frac{b(\vT)\cdot v(\vT)}{1-a(\vT)\cdot\big(1-v(\vT)\big)}\Big)\bigg].
 \]

\medskip
\noindent The latter, along with the fact that $P_M$ is a claim number distribution (cf., e.g., \cite{Sc}, Theorem 5.1.4), implies that $P_M\in {\bf Panjer}(\widetilde{a}(\vT),\widetilde{b}(\vT);0)$, where $\widetilde{a}(\vT):=\frac{a(\vT)\cdot v(\vT)}{1-a(\vT)\cdot\big(1-v(\vT)\big) }$ and $\widetilde{b}(\vT):= \frac{b(\vT)\cdot v(\vT)}{1-a(\vT)\cdot\big(1-v(\vT)\big)}$. 
\end{ex}

The following numerical examples demonstrate the behaviour of the pmf of the aggregate claims $ S $. For illustrative purposes, we focus on three widely used mixing distributions in actuarial literature, namely the Inverse Gaussian (IG), the the Gamma (Ga) and the Beta (Be) distribution.

\begin{ex} 
	\label{expcpp}
	\normalfont
	Let $D\subseteq(0,\infty)$ and $P_N=\textbf{MP}\big(\xi(\vT)\big)$, where $\xi$ is a $\B(D)$-$\B(0,\infty)$-measurable function. Since $P_N\in {\bf Panjer}(a(\vT),b(\vT);0)$, we get $\al(\vT)=0$ and $b(\vT)=\xi(\vT)$ (see Table \ref{pt}). Let $v$ be a $\B(D)$-$\B(0,1)$-measurable function. \smallskip 
	
	\noindent \textbf{(a)}  Take $P_{X_1}=\textbf{MNB}\big(1,v(\vT)\big)$. Since $P_{X_1}(\N_0)=1$, we may apply Theorem \ref{pan2} to get 
	\[
	g(x)=\begin{cases}
		\E_P\big[e^{\xi(\vT)\cdot (v(\vT)-1)}\big]	 & \text{, if } x=0\\
		\sum_{y=1}^xD_{x,y}\cdot g(x-y)& \text{, if } x\in \N  
	\end{cases}, 
	\]
	where $D_{x,y}=\frac{y}{x}\cdot\E_{\rho_{x-y}}\Big[\xi(\vT)\cdot v(\vT)\cdot\big(1-v(\vT)\big)^y\Big]$ for any $x\in\N$ and $y\in\{1,\ldots,x\}$. Figure \ref{fig46a} depicts the behaviour of $g$ for $\xi(\vT)=\vT$ and $v(\vT)=\frac{\vT}{1+\vT}$.
	\begin{figure}[H]
		\centering
		\begin{subfigure}[t]{0.32\textwidth}
			\subcaption*{\textbf{IG}$(2,5)$}
			\includegraphics[width=0.95\linewidth]{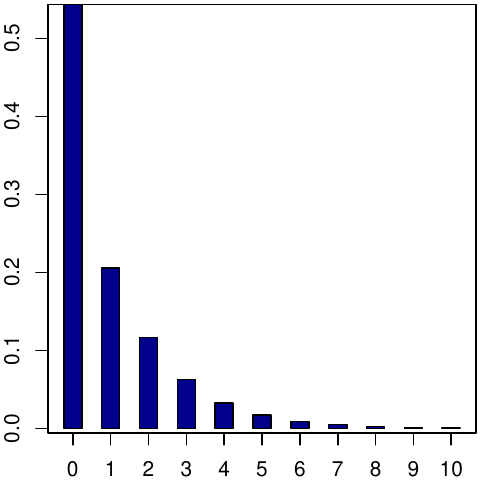}
			\label{fig46a1}
		\end{subfigure}
		~ 
		\begin{subfigure}[t]{0.32\textwidth}
			\subcaption*{\textbf{Ga}$(5,4)$}
			\includegraphics[width=0.95\linewidth]{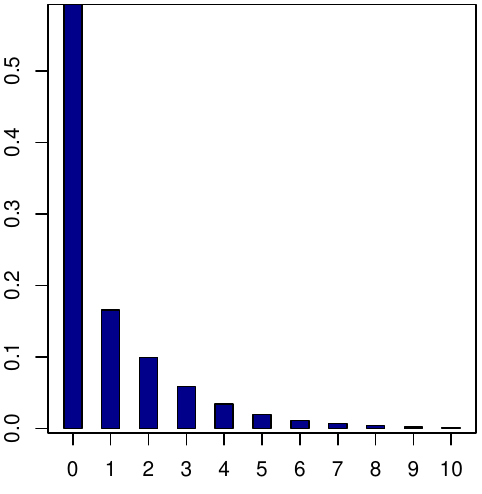}
			\label{fig46a2}
		\end{subfigure}
		~ 
		\begin{subfigure}[t]{0.32\textwidth}
			\subcaption*{\textbf{Be}$(5,5)$}
			\includegraphics[width=0.95\linewidth]{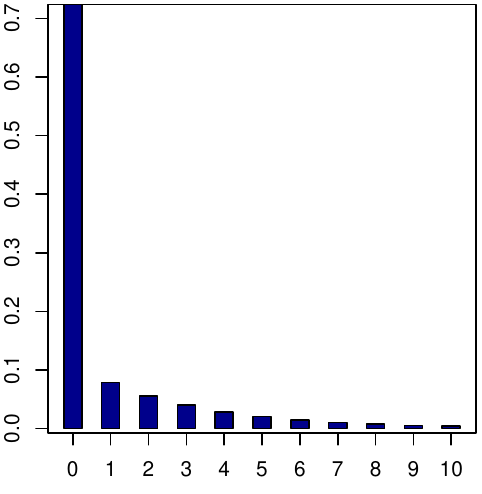}
			\label{fig46a3}
		\end{subfigure}
		\caption{The pmf of $ S $  for $P_N=\textbf{MP}(\vT)$ and  $P_{X_1}=\textbf{MNB}\big(1,\frac{\vT}{1+\vT}\big)$.}
		\label{fig46a}
	\end{figure}
	
	\noindent \textbf{(b)} Assume now that the claim sizes are distributed according to a mixed zero truncated geometric distribution with parameter $v(\vT)$ (written $\textbf{MZTG}\big(v(\vT)\big)$ for short), i.e., 
	\[
f_{X_1\mid\vT}(x) = v(\vT)\cdot\big(1-v(\vT)\big)^{x-1}\quad P{\uph}\sigma(\vT)\text{-a.s.}
	\]
	for any $x\in\N$. Since   $P_{X_1}(\N_0)=1$ with $f_{X_1\mid\vT}(0)=0$ $P{\uph}\sigma(\vT)$-a.s.,  we may apply Theorem~\ref{pan2} to get 
	\[
	g(x)=\begin{cases}
		\E_P\big[e^{-\xi(\vT)}\big]	 & \text{, if } x=0\\
		\sum_{y=1}^xD_{x,y}\cdot g(x-y)& \text{, if } x\in \N
	\end{cases},
	\]
	where $D_{x,y}=\frac{y}{x}\cdot\E_{\rho_{x-y}}\big[\xi(\vT)\cdot v(\vT)\cdot\big(1-v(\vT)\big)^{y-1}\big]$ for any $x\in\N$ and $y\in\{1,\ldots,x\}$. Figure \ref{fig46b} illustrates the behaviour of $g$ for the same choices of $\xi(\vT)$ and $v(\vT)$ as in (a).
	\begin{figure}[H]
		\centering
		\begin{subfigure}[t]{0.32\textwidth}
			\subcaption*{\textbf{IG}$(2,5)$}
			\includegraphics[width=0.95\linewidth]{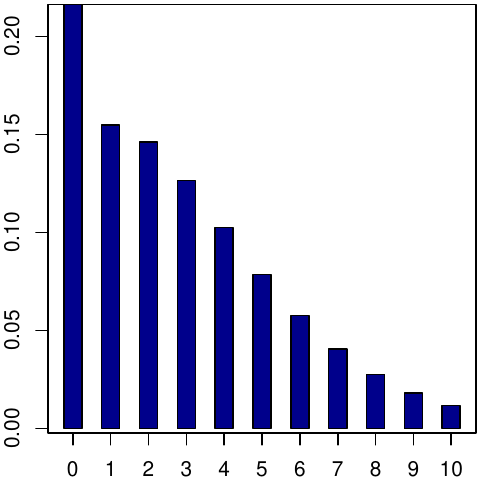}
			\label{fig46b1}
		\end{subfigure}
		~ 
		\begin{subfigure}[t]{0.32\textwidth}
			\subcaption*{\textbf{Ga}$(5,4)$}
			\includegraphics[width=0.95\linewidth]{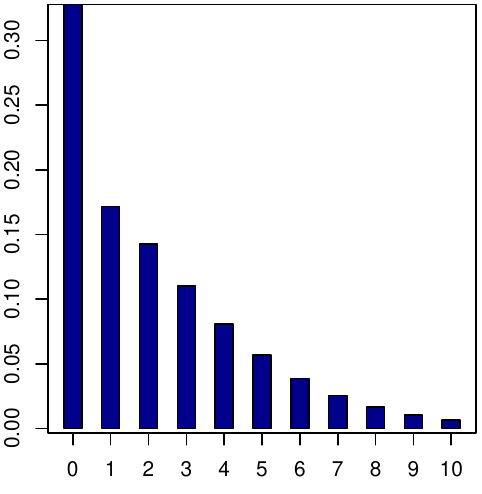}
			\label{fig46b2}
		\end{subfigure}
		~ 
		\begin{subfigure}[t]{0.32\textwidth}
			\subcaption*{\textbf{Be}$(5,5)$}
			\includegraphics[width=0.95\linewidth]{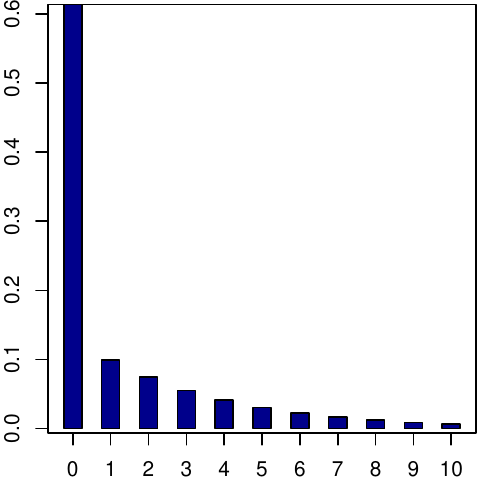}
			\label{fig46b3}
		\end{subfigure}
		\caption{The pmf of $ S $ for $P_N=\textbf{MP}(\vT)$ and $P_{X_1}=\textbf{MZTG}\big(\frac{\vT}{1+\vT}\big)$.}
		\label{fig46b}
	\end{figure}
	
	\noindent \textbf{(c)} Assume that  $X$ and $\vT$ are $P$-independent and that $v(\vT)$ is degenerate at some point $p\in(0,1)$. If $P_{X_1}=\textbf{ZTG}(p)$, then we can apply Theorem \ref{pan2}, along with  Remark \ref{com}(a), to get 
	\[
	g(x)=\begin{cases}
		\E_P\big[e^{-\xi(\vT)}\big]	 & \text{, if } x=0\\
		\sum_{y=1}^x{D}_{x,y} \cdot g(x-y)& \text{, if } x\in \N
	\end{cases},
	\]
	where ${D}_{x,y}=\frac{y}{x}\cdot p\cdot (1-p)^{y-1}\cdot\E_{\rho_{x-y}}\big[\xi(\vT)\big]$ for any $x\in\N$ and $y\in\{1,\ldots,x\}$. Figure \ref{fig46c} depicts  $g$ for $p=3/5$ and $\xi(\vT)$ as in (a).
	\begin{figure}[H]
		\centering
		\begin{subfigure}[t]{0.32\textwidth}
			\subcaption*{\textbf{IG}$(2,5)$}
			\includegraphics[width=0.95\linewidth]{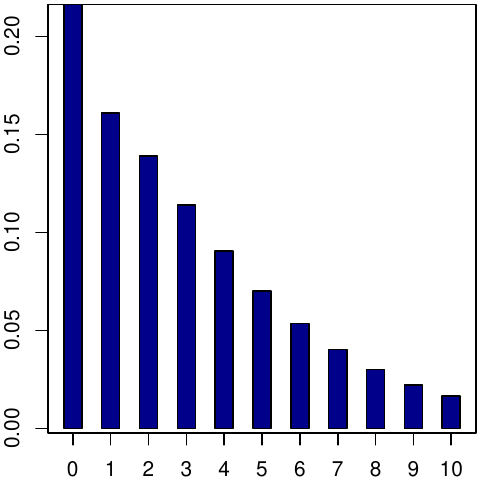}
			\label{fig46c1}
		\end{subfigure}
		~ 
		\begin{subfigure}[t]{0.32\textwidth}
			\subcaption*{\textbf{Ga}$(5,4)$}
			\includegraphics[width=0.95\linewidth]{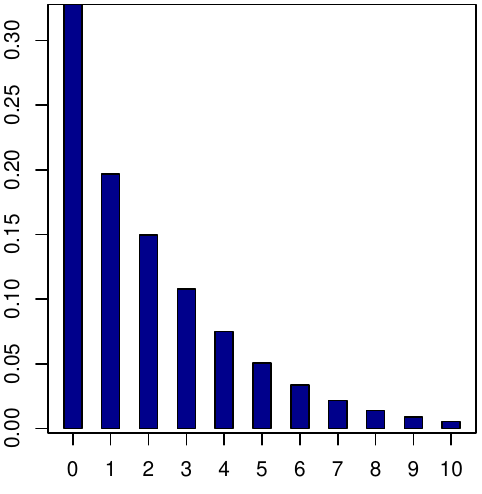}
			\label{fig46c2}
		\end{subfigure}
		~ 
		\begin{subfigure}[t]{0.32\textwidth}
			\subcaption*{\textbf{Be}$(5,5)$}
			\includegraphics[width=0.95\linewidth]{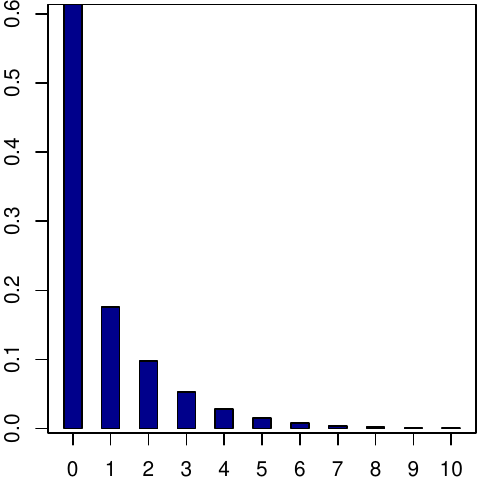}
			\label{fig46c3}
		\end{subfigure}
		\caption{The pmf of $ S $ for $P_N=\textbf{MP}(\vT)$ and $P_{X_1}=\textbf{ZTG}\big(\frac{3}{5}\big)$.}
		\label{fig46c}
	\end{figure}
	
	\noindent \textbf{(d)} Assume that $N$ and $\vT$ are $P$-independent and that $\xi(\vT)$ is degenerate at some point  $\theta_0\in D$. If $P_{X_1}=\textbf{MZTG}\big(v(\vT)\big)$, we may apply Corollary \ref{exch} to get 
	\[
	g(x)=\begin{cases}
		e^{-\theta_0}  & \text{, if } x=0\\
		\sum_{y=1}^x {D}_{x,y} \cdot g(x-y)& \text{, if } x\in \N
	\end{cases}, 
	\]
	where ${D}_{x,y}=\frac{y\cdot \theta_0}{x}\cdot\E_{\rho_{x-y}}\big[v(\vT)\cdot\big(1-v(\vT)\big)^{y-1}\big]$ for any  $x\in \N$ and $y\in\{1,\ldots x\}$. Figure \ref{fig46d} exhibits $g$ for $\theta_0=3$ and $v(\vT)$ as in (a).
	\begin{figure}[H]
		\centering
		\begin{subfigure}[t]{0.32\textwidth}
			\subcaption*{\textbf{IG}$(2,5)$}
			\includegraphics[width=0.95\linewidth]{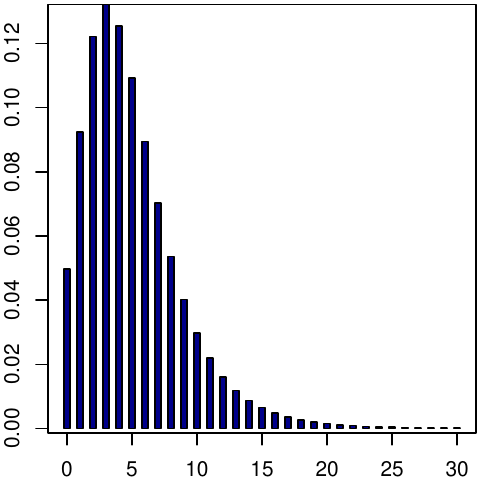}
			\label{fig46d1}
		\end{subfigure}
		~ 
		\begin{subfigure}[t]{0.32\textwidth}
			\subcaption*{\textbf{Ga}$(5,4)$}
			\includegraphics[width=0.95\linewidth]{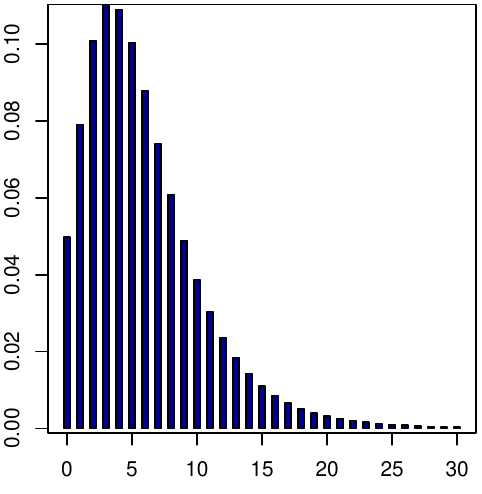}
			\label{fig46d2}
		\end{subfigure}
		~ 
		\begin{subfigure}[t]{0.32\textwidth}
			\subcaption*{\textbf{Be}$(5,5)$}
			\includegraphics[width=0.95\linewidth]{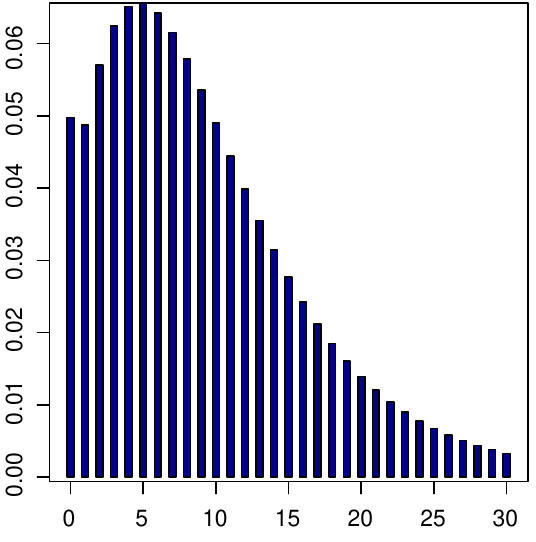}
			\label{fig46d3}
		\end{subfigure}
		\caption{The pmf of $ S $ for $P_N=\textbf{P}(3)$ and $P_{X_1}=\textbf{MZTG}\big(\frac{\vT}{1+\vT}\big)$.}
		\label{fig46d}
	\end{figure}
\end{ex}

\begin{ex}
\label{expcnb1}
\normalfont
Let $D\subseteq (0,\infty)$, $P_N=\textbf{MNB}\big(\rho_1(\vT),\rho_2(\vT)\big)$, where  $\rho_1, \rho_2$ are $\B(D)$-$\B(0,\infty)$- and $\B(D)$-$\B(0,1)$-measurable functions, respectively. Since $P_N\in {\bf Panjer}(a(\vT),b(\vT);0)$, we get $\al(\vT)=1-\rho_2(\vT)$ and $b(\vT)=\big(\rho_1(\vT)-1\big)\cdot\big(1-\rho_2(\vT)\big)$ (see Table \ref{pt}). Take $P_{X_1}=\textbf{MZTG}\big(\frac{\vT}{1+\vT}\big)$.\smallskip

\noindent \textbf{(a)}  Let $\big(\rho_1(\theta),\rho_2(\theta)\big)=\big(\theta,e^{-\theta}\big)$ for any $\theta\in D$. Since  $\al(\vT)=1-e^{-\vT}$ and $b(\vT)=(\vT-1)\cdot\big(1-e^{-\vT}\big)$, we may apply Theorem \ref{pan2} to get 
	\[
	g(x)=\begin{cases}
		\E_P\big[e^{-\vT^2}\big]	 & \text{, if } x=0\\
		\sum_{y=1}^xD_{x,y}\cdot g(x-y)& \text{, if } x\in \N
	\end{cases}, 
	\]
	where $D_{x,y}= \E_{\rho_{x-y}}\Big[\big(1+(\vT-1)\cdot\frac{y}{x}\big)\cdot \frac{(1-e^{-\vT})\cdot\vT}{(1+\vT)^{y}} \Big]$ for any  $x\in \N$ and $y\in\{1,\ldots, x\}$ (see Figure \ref{fig47a}).
	\begin{figure}[H]
		\centering
		\begin{subfigure}[t]{0.32\textwidth}
			\subcaption*{\textbf{IG}$(2,5)$}
			\includegraphics[width=0.95\linewidth]{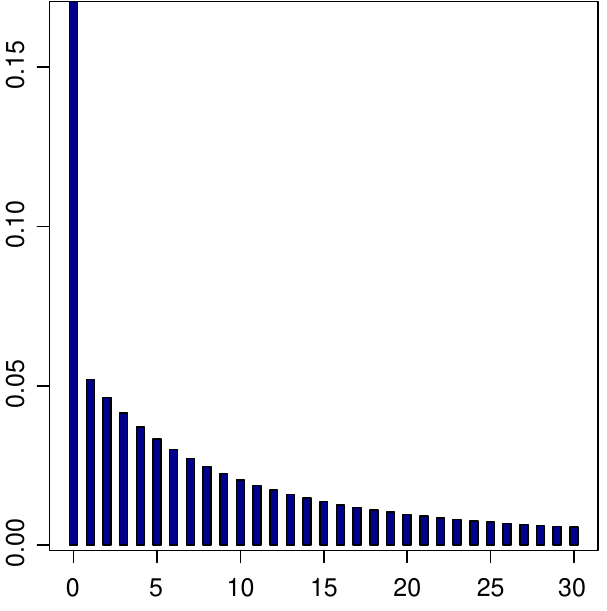}
			\label{fig47a1}
		\end{subfigure}
		~ 
		\begin{subfigure}[t]{0.32\textwidth}
			\subcaption*{\textbf{Ga}$(5,4)$}
			\includegraphics[width=0.95\linewidth]{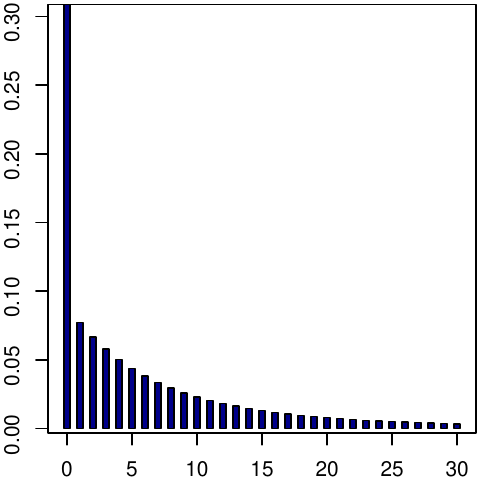}
			\label{fig47a2}
		\end{subfigure}
		~ 
		\begin{subfigure}[t]{0.32\textwidth}
			\subcaption*{\textbf{Be}$(5,5)$}
			\includegraphics[width=0.95\linewidth]{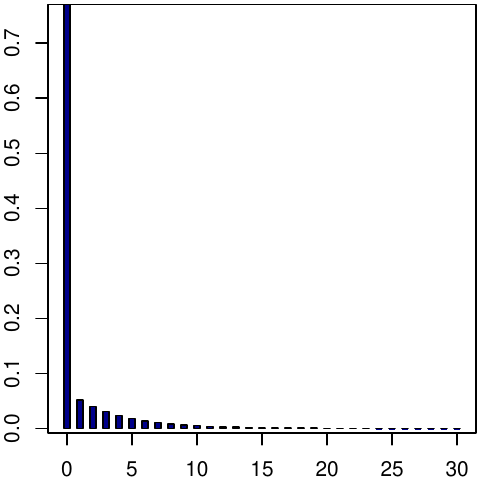}
			\label{fig47a3}
		\end{subfigure}
		\caption{The pmf of $ S $ for $P_N=\textbf{MNB}\big(\vT,e^{-\vT} \big)$  and $P_{X_1}=\textbf{MZTG}\big(\frac{\vT}{1+\vT}\big)$.}
		\label{fig47a}
	\end{figure}
	
	\noindent \textbf{(b)} Now take $\big(\rho_1(\theta),\rho_2(\theta)\big)=\big(2 \theta,\frac{1}{1+\theta}\big)$ for any $\theta\in D$. As $\al(\vT)=\frac{\vT}{1+\vT}$ and $b(\vT)=(2\vT-1)\cdot\frac{\vT}{1+\vT}$, we can apply Theorem \ref{pan2} to obtain  
	\[
	g(x)=\begin{cases}
		\E_P\Big[\big(\frac{1}{1+\vT}\big)^{2\vT}\Big] & \text{, if } x=0\\
		\sum_{y=1}^xD_{x,y}\cdot g(x-y)& \text{, if } x\in \N
	\end{cases}, 
	\]
	where $D_{x,y}= \E_{\rho_{x-y}}\Big[\big(1+(2\vT-1)\cdot\frac{y}{x}\big)\cdot \frac{\vT^2}{(1+\vT)^{y+1}}\Big]$ for any  $x\in \N$ and $y\in\{1,\ldots, x\}$ (see Figure \ref{fig47b}).		
	\begin{figure}[H]
		\centering
		\begin{subfigure}[t]{0.32\textwidth}
			\subcaption*{\textbf{IG}$(2,5)$}
			\includegraphics[width=0.95\linewidth]{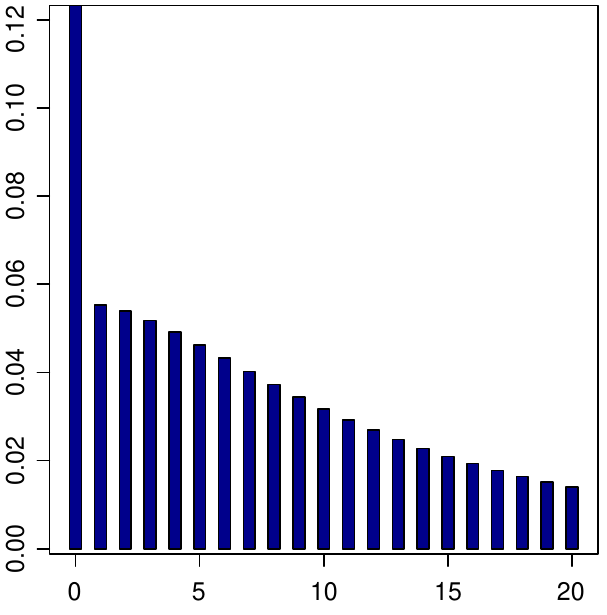}
			\label{fig47b1}
		\end{subfigure}
		~ 
		\begin{subfigure}[t]{0.32\textwidth}
			\subcaption*{\textbf{Ga}$(5,4)$}
			\includegraphics[width=0.95\linewidth]{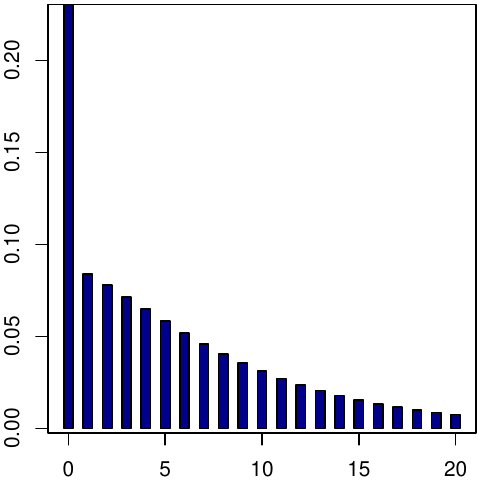}
			\label{fig47b2}
		\end{subfigure}
		~ 
		\begin{subfigure}[t]{0.32\textwidth}
			\subcaption*{\textbf{Be}$(5,5)$}
			\includegraphics[width=0.95\linewidth]{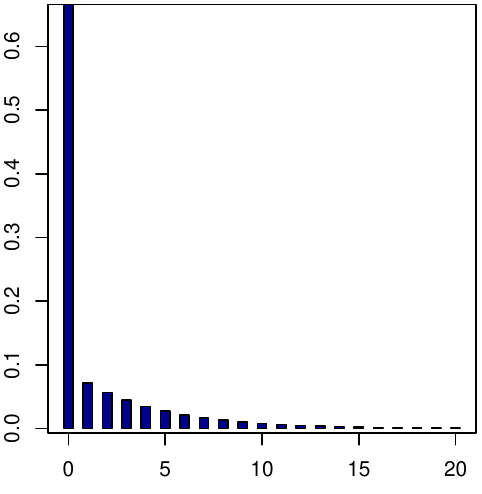}
			\label{fig47b3}
		\end{subfigure}
		\caption{The pmf of $ S $ with $P_N=\textbf{MNB}\big(2\vT,\frac{1}{1+\vT} \big)$ and $P_{X_1}=\textbf{MZTG}\big(\frac{\vT}{1+\vT}\big)$.}
		\label{fig47b}
	\end{figure}
	
	\noindent \textbf{(c)}  Let $D=(0,1)$ and take  $\big(\rho_1(\theta),\rho_2(\theta)\big)=(r,\theta)$ for any $\theta\in D$, where $r>0$. Since $\al(\vT)=1-\vT$ and $b(\vT)=(r-1)\cdot(1-\vT)$, we can apply Theorem \ref{pan2} to obtain  
	\[
	g(x)=\begin{cases}
		\E_P\big[\vT^{r}\big] & \text{, if } x=0\\
		\sum_{y=1}^xD_{x,y}\cdot g(x-y)& \text{, if } x\in \N
	\end{cases}, 
	\]
	where $D_{x,y}=\big(1+(r-1)\cdot\frac{y}{x}\big)\cdot \E_{\rho_{x-y}}\Big[\frac{(1-\vT)\cdot\vT}{(1+\vT)^{y}}\Big]$ for any  $x\in \N$ and $y\in\{1,\ldots, x\}$. Figure \ref{fig47c} illustrates $g$ for  $P_\vT=\textbf{Be}(\al,\be)$, where $\al,\be>0$, and different values of $r$.
	\begin{figure}[H]
		\centering
		\begin{subfigure}[t]{0.32\textwidth}
			\subcaption*{$r=2$; $\al=2$; $\be=4$}
			\includegraphics[width=0.95\linewidth]{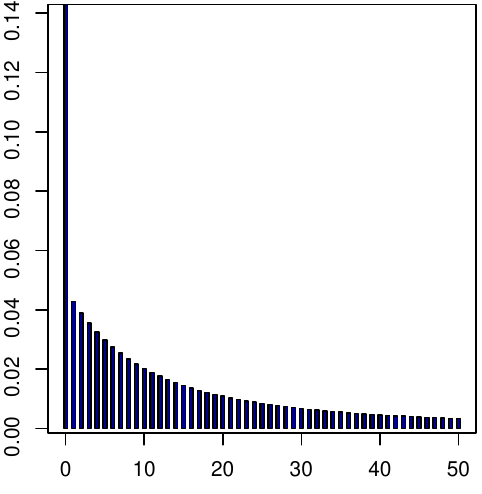}
			\label{fig47c1}
		\end{subfigure}
		~ 
		\begin{subfigure}[t]{0.32\textwidth}
			\subcaption*{$r=5$; $\al=4$; $\be=4$}
			\includegraphics[width=0.95\linewidth]{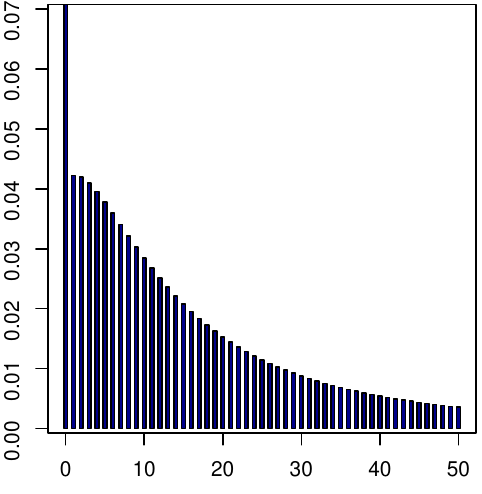}
			\label{fig47c2}
		\end{subfigure}
		~ 
		\begin{subfigure}[t]{0.32\textwidth}
			\subcaption*{$r=10$; $\al=7$; $\be=5$}
			\includegraphics[width=0.95\linewidth]{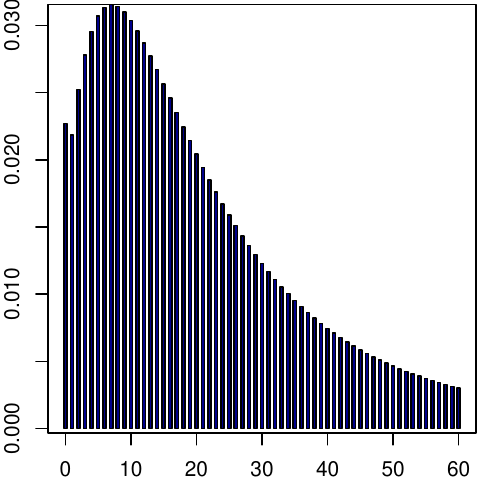}
			\label{fig47cd3}
		\end{subfigure}
		\caption{The pmf of $ S $ for different values of $r$, $\alpha$ and $\beta$.}
		\label{fig47c}
	\end{figure}
	
	\noindent \textbf{(d)}  Let $D=(0,\infty)$ and take  $\big(\rho_1(\theta),\rho_2(\theta)\big)=(\theta,p)$ for any $\theta\in D$, where $p\in(0,1)$. As $\al(\vT)=1-p$ and $b(\vT)=(\vT-1)\cdot(1-p)$, we can apply Theorem \ref{pan2} to obtain  
	\[
	g(x)=\begin{cases}
		\E_P\big[p^\vT\big] & \text{, if } x=0\\
		\sum_{y=1}^xD_{x,y}\cdot g(x-y)& \text{, if } x\in \N
	\end{cases}, 
	\]
	where $D_{x,y}= (1-p)\cdot\E_{\rho_{x-y}}\Big[\big(1+(\vT-1)\cdot\frac{y}{x}\big)\cdot \frac{\vT}{(1+\vT)^{y}}\Big]$ for any  $x\in \N$ and $y\in\{1,\ldots, x\}$. Figure \ref{fig47d} depicts $g$ for $P_\vT=\textbf{Ga}(\al,\be)$, where $\al,\be>0$, and various values of $p$.
	\begin{figure}[H]
		\centering
		\begin{subfigure}[t]{0.32\textwidth}
			\subcaption*{$p=1/3$; $\al=2$; $\be=1$}
			\includegraphics[width=0.95\linewidth]{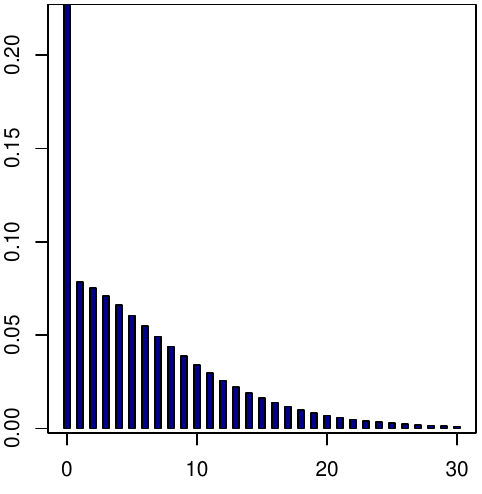}
			\label{fig47d1}
		\end{subfigure}
		~ 
		\begin{subfigure}[t]{0.32\textwidth}
			\subcaption*{$p=1/2$; $\al=5$; $\be=1/4$}
			\includegraphics[width=0.95\linewidth]{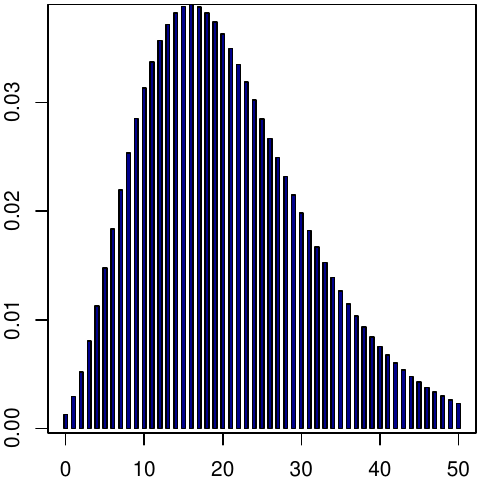}
			\label{fig47d2}
		\end{subfigure}
		~ 
		\begin{subfigure}[t]{0.32\textwidth}
			\subcaption*{$p=2/3$; $\al=3$; $\be=1/3$}
			\includegraphics[width=0.95\linewidth]{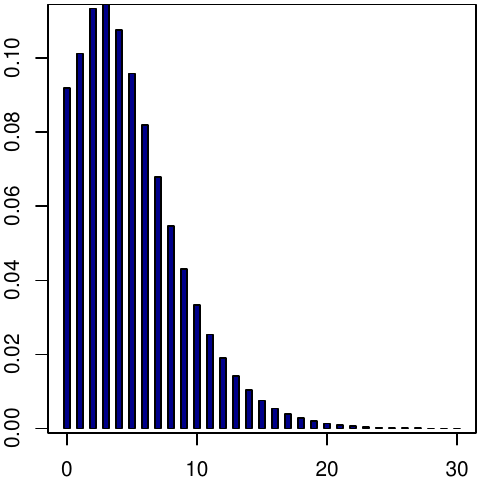}
			\label{fig47d3}
		\end{subfigure}
		\caption{The pmf of $ S $ for different values of $p$, $\alpha$ and $\beta$.}
		\label{fig47d}
	\end{figure}
	\noindent\textbf{(e)} Assume that $N$ and $\vT$ are $P$-independent and let $\big(\rho_1(\theta),\rho_2(\theta)\big)=(r,p)$ for any $\theta\in D$, where $r>0$ and $p\in(0,1)$. Since  $\al=1-p$ and $b=(r-1)\cdot(1-p)$, we can apply Corollary \ref{exch} to obtain  
	\[
	g(x)=\begin{cases}
		p^r & \text{, if } x=0\\
		\sum_{y=1}^xD_{x,y}(F)\cdot g(x-y)& \text{, if } x\in \N
	\end{cases}, 
	\]
	where $D_{x,y}=(1-p)\cdot\big(1+(r-1)\cdot\frac{y}{x}\big)\cdot\E_{\rho_{x-y}}\Big[\frac{\vT}{(1+\vT)^{y}}\Big]$ for any  $x\in \N$ and $y\in\{1,\ldots, x\}$ (see Figure \ref{fig47e}).
	\begin{figure}[H]
		\centering
		\begin{subfigure}[t]{0.32\textwidth}
			\subcaption*{\textbf{IG}$(2,5)$}
			\includegraphics[width=0.95\linewidth]{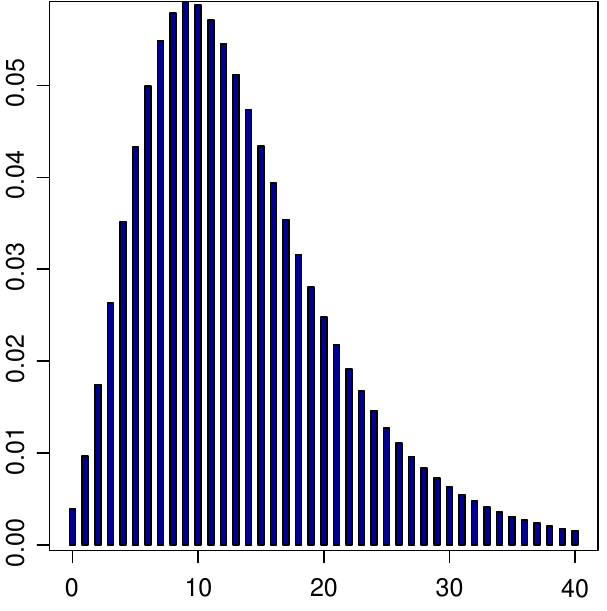}
			\label{fig47e1}
		\end{subfigure}
		~ 
		\begin{subfigure}[t]{0.32\textwidth}
			\subcaption*{\textbf{Ga}$(5,4)$}
			\includegraphics[width=0.95\linewidth]{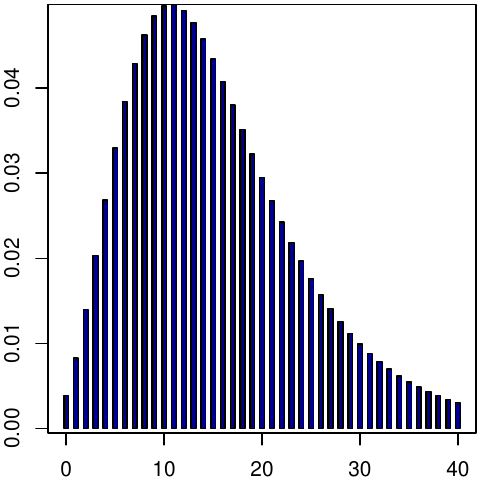}
			\label{fig47e2}
		\end{subfigure}
		~ 
		\begin{subfigure}[t]{0.32\textwidth}
			\subcaption*{\textbf{Be}$(5,5)$}
			\includegraphics[width=0.95\linewidth]{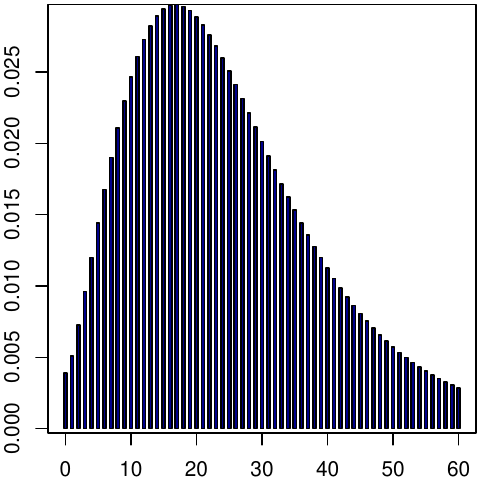}
			\label{fig47e3}
		\end{subfigure}
		\caption{The pmf of $ S $ with $P_N=\textbf{NB}(8,1/2)$ and $P_{X_1}=\textbf{MZTG}\big(\frac{\vT}{1+\vT}\big)$.}
		\label{fig47e}
	\end{figure}
\end{ex}  

\begin{ex}\label{CMB}
	\normalfont
	Let $D\subseteq (0,\infty)$, $P_N=\textbf{MB}\big(n,z_2(\vT)\big)$, where $n\in\N$  and $z_2$ is a  $\B(D)$-$\B(0,1)$-measurable function. As $P_N\in {\bf Panjer}(a(\vT),b(\vT);0)$, we get $\al(\vT)=-\frac{z_2(\vT)}{1-z_2(\vT)}$ and $b(\vT)=(n+1)\cdot \frac{z_2(\vT)}{1-z_2(\vT)}$ (see Table \ref{pt}). Take $P_{X_1}=\textbf{MZTG}\big(\frac{\vT}{1+\vT}\big)$.\smallskip 
	
	\noindent \textbf{(a)}  Let $D=(0,1)$ and take $z_2(\theta)=1-\theta$ for any $\theta\in D$. Since  $\al(\vT)=-\frac{1-\vT}{\vT}$ and $b(\vT)=(n+1)\cdot\frac{1-\vT}{\vT}$, we may apply Theorem \ref{pan2} to get 
	\[
	g(x)=\begin{cases}
		\E_P\big[\vT^n\big]	 & \text{, if } x=0\\
		\sum_{y=1}^xD_{x,y}\cdot g(x-y)& \text{, if } x\in \N
	\end{cases}, 
	\] 
where $D_{x,y}=\big(-1+(n+1)\cdot\frac{y}{x}\big)\cdot\E_{\rho_{x-y}}\big[\frac{1-\vT}{(1+\vT)^{y}}\big]$ for any  $x\in \N$ and $y\in\{1,\ldots, x\}$. Figure \ref{fig48a} depicts $g$ for $P_\vT=\textbf{Be}(\al,\be)$, where $\al,\be>0$, and different values of $n$.
	\begin{figure}[H]
		\centering
		\begin{subfigure}[t]{0.32\textwidth}
			\subcaption*{$n=2$; $\al=3$; $\be=5$}
			\includegraphics[width=.95\linewidth]{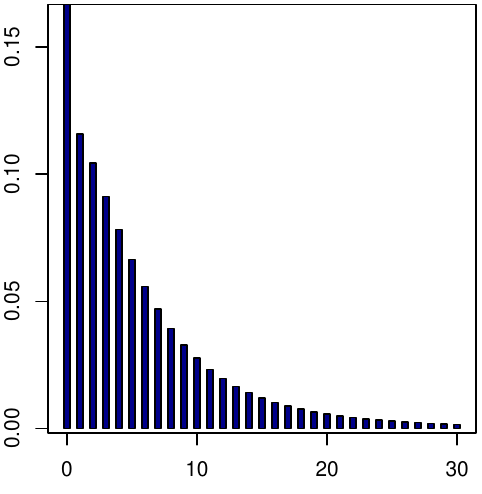}
			\label{fig48a1}
		\end{subfigure}
		~ 
		\begin{subfigure}[t]{0.32\textwidth}
			\subcaption*{$n=5$; $\al=2$; $\be=3$}
			\includegraphics[width=0.95\linewidth]{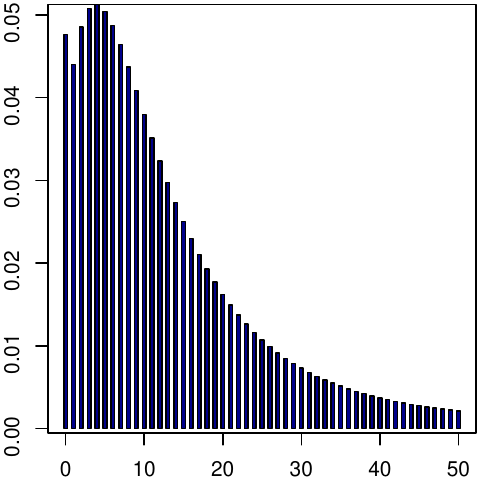}
			\label{fig48a2}
		\end{subfigure}
		~ 
		\begin{subfigure}[t]{0.32\textwidth}
			\subcaption*{$n=10$; $\al=6$; $\be=8$}
			\includegraphics[width=0.95\linewidth]{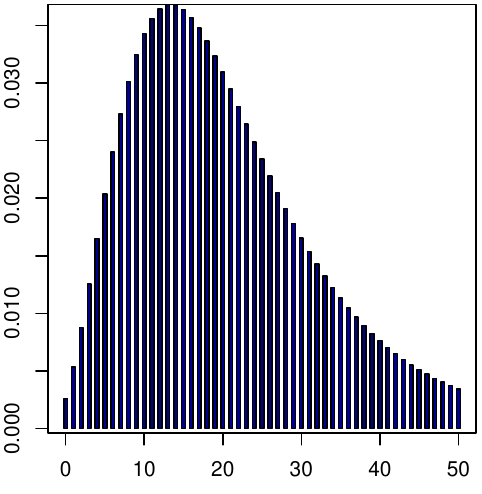}
			\label{fig48a3}
		\end{subfigure}
		\caption{The pmf of $ S $ for different values of $n$, $\alpha$ and $\beta$.}
		\label{fig48a}
	\end{figure}
	\noindent \textbf{(b)}  Let $D=(0,\infty)$ and take $z_2(\theta)=e^{-\theta}$  for any $\theta\in D$. As $\al(\vT)=-\frac{e^{-\vT}}{1-e^{-\vT}}$ and $b(\vT)=(n+1)\cdot\frac{e^{-\vT}}{1-e^{-\vT}}$, we may apply Theorem \ref{pan2}, together with Remark \ref{com}(a), to get 
	\[
	g(x)=\begin{cases}
		\E_P\Big[\big(1-e^{-\vT}\big)^n\Big]	 & \text{, if } x=0\\
		\sum_{y=1}^xD_{x,y}\cdot g(x-y)& \text{, if } x\in \N
	\end{cases}, 
	\]
	where $D_{x,y}=\big(-1+(n+1)\cdot\frac{y}{x}\big)\cdot\E_{\rho_{x-y}}\Big[\frac{\vT\cdot e^{-\vT}}{(1-e^{-\vT})\cdot(1+\vT)^{y}} \Big]$ for any  $x\in \N$ and $y\in\{1,\ldots, x\}$. Figure \ref{fig48b} illustrates $g$ for $P_\vT=\textbf{Ga}(\al,\be)$, where $\al,\be>0$, and various values of $n$.
	\begin{figure}[H]
		\centering
		\begin{subfigure}[t]{0.32\textwidth}
			\subcaption*{$n=2$; $\al=3/4$; $\be=3$}
			\includegraphics[width=0.95\linewidth]{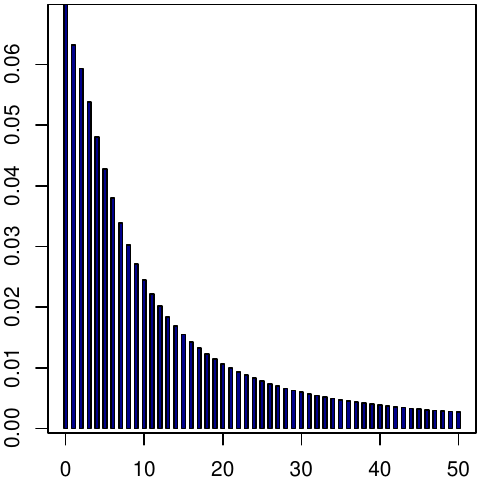}
			\label{fig48b1}
		\end{subfigure}
		~ 
		\begin{subfigure}[t]{0.32\textwidth}
			\subcaption*{$n=5$; $\al=3$; $\be=4$}
			\includegraphics[width=0.95\linewidth]{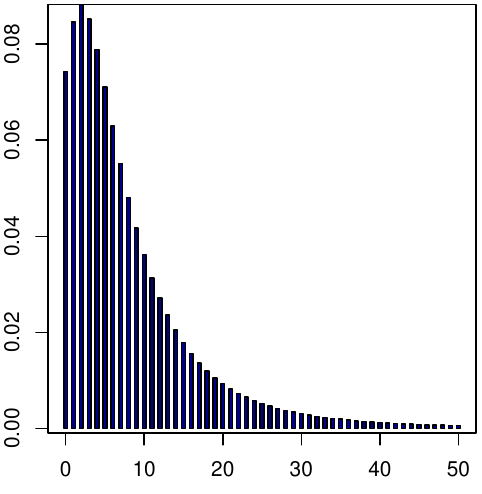}
			\label{fig48b2}
		\end{subfigure}
		~ 
		\begin{subfigure}[t]{0.32\textwidth}
			\subcaption*{$n=10$; $\al=4$; $\be=5$}
			\includegraphics[width=0.95\linewidth]{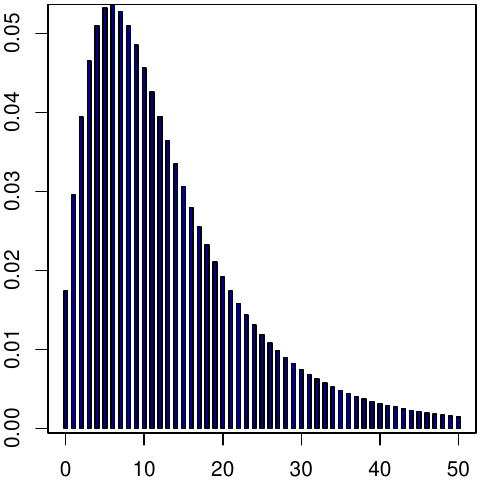}
			\label{fig48b3}
		\end{subfigure}
		\caption{The pmf of $ S $ for different values of $n$, $\alpha$ and $\beta$.}
		\label{fig48b}
	\end{figure}
	\noindent \textbf{(c)}  Let $D=(0,\infty)$ and take $z_2(\theta)=\frac{1}{1+\theta}$  for any $\theta\in D$. Since $\al(\vT)=-\frac{1}{\vT}$ and $b(\vT)=\frac{(n+1)}{\vT}$, we may apply Theorem \ref{pan2}  to get 
	\[
	g(x)=\begin{cases}
		\E_P\Big[\big(\frac{\vT}{1+\vT}\big)^n\Big]	 & \text{, if } x=0\\
		\sum_{y=1}^xD_{x,y}\cdot g(x-y)& \text{, if } x\in \N
	\end{cases}, 
	\]
	where $D_{x,y}=\big(-1+(n+1)\cdot\frac{y}{x}\big)\cdot\E_{\rho_{x-y}}\Big[\frac{1}{(1+\vT)^{y}}\Big]$ for any $x\in \N$ and $y\in\{1,\ldots, x\}$. Figure \ref{fig48c} depicts $g$ for $P_\vT=\textbf{IG}(\mu,\varphi)$, where $\mu,\varphi>0$, and different values of $n$.
	\begin{figure}[H]
		\centering
		\begin{subfigure}[t]{0.32\textwidth}
			\subcaption*{$n=2$; $\mu=2$; $\varphi=1/2$}
			\includegraphics[width=0.95\linewidth]{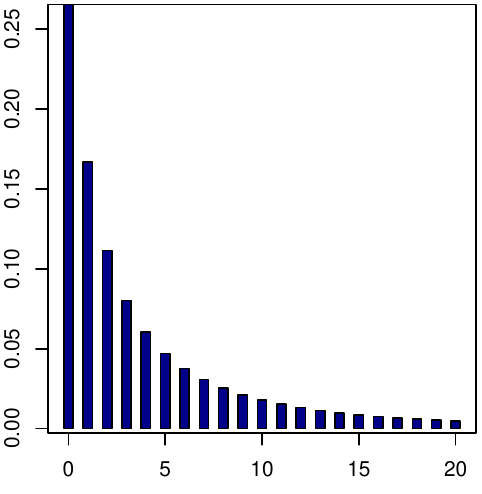}
			\label{fig48c1}
		\end{subfigure}
		~ 
		\begin{subfigure}[t]{0.32\textwidth}
			\subcaption*{$n=5$; $\mu=1$; $\varphi=3$}
			\includegraphics[width=0.95\linewidth]{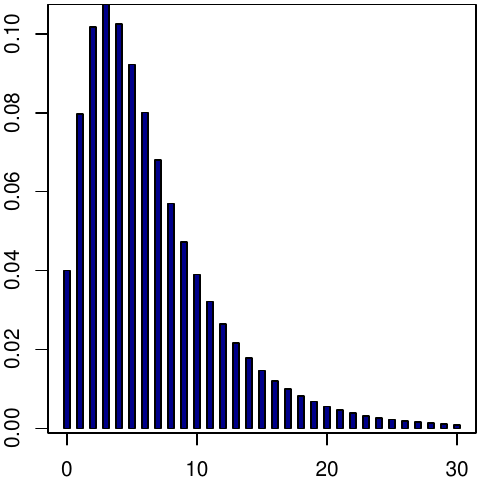}
			\label{fig48c2}
		\end{subfigure}
		~ 
		\begin{subfigure}[t]{0.32\textwidth}
			\subcaption*{$n=10$; $\mu=0.5$; $\varphi=10$}
			\includegraphics[width=0.95\linewidth]{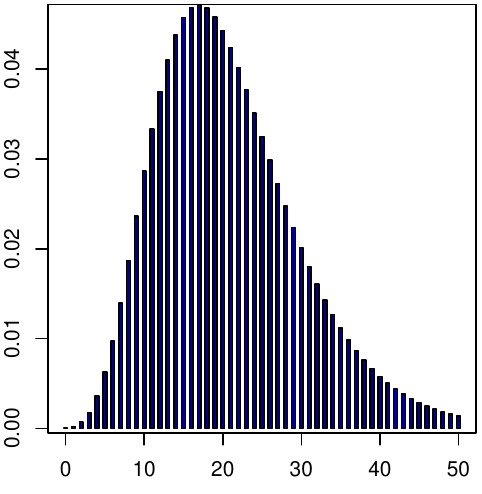}
			\label{fig48c3}
		\end{subfigure}
		\caption{The pmf of $ S $ for different values of $n$, $\mu$ and $\varphi$.}
		\label{fig48c}
	\end{figure}
\end{ex}
\section{Concluding Remarks}\label{conc}

When dealing with inhomogeneous insurance portfolios, mixtures of claim number distributions are frequently used to model the claim counts. In actuarial practice, the usual independence assumptions seem to be unrealistic. In particular, the mutual independence between claims sizes and counts appears to be implausible, especially when considering inhomogeneous portfolios. The above led us to the development of this work.

This paper aims to investigate the mixed counterpart of the original ${\bf Panjer}(a,b;0)$ class and the corresponding compound distributions by relaxing the usual independence assumptions in two ways. More specifically, we allow the parameters of the claim number and the claim size distributions to be random, and we assume that the claim size process is conditionally i.i.d. and conditionally mutually independent of the claim counts. 

Based on the above assumptions and the fundamental concept of regular conditional probabilities, this work provides a characterization for the members of ${\bf Panjer}(a(\vT),b(\vT);0)$ class, as well as two recursive algorithms for $ P_N $ and $ P_S $, respectively, where the recursion for $ P_S $ also covers the case of a compound Panjer distribution with exchangeable claims. Our results are accompanied by various numerical examples to highlight the practical relevance of this work.

It is worth mentioning that the members of the ${\bf Panjer}(a(\vT),b(\vT);0)$ class may be of particular interest in non-life insurance problems, for example, when insurance data exhibit overdispersion. 
Finally, this research allows us to consider some possible extensions of actuarial interest beyond the scopes of this paper, such as the study of the mixed counterpart of ${\bf Panjer}(a,b;k)$ for $k\geq 1$ and the derivation of De Pril's recursion for the moments of $S$ (see \cite{dp}, Theorem, p. 118).

\end{document}